\newtheorem{thm}{Theorem}[section]
\newtheorem{theorem}[thm]{Theorem}
\newtheorem{corollary}[thm]{Corollary}
\newtheorem{lemma}[thm]{Lemma}
\newtheorem{prop}[thm]{Proposition}
\newtheorem{defn}[thm]{Definition}
\newtheorem{rem}[thm]{Remark}
\newtheorem{exam}[thm]{Example}
\newtheorem{prob}[thm]{Problem}
\numberwithin{equation}{section}
\newcommand{\cA}{{\mathcal A}}
\newcommand{\cD}{{\mathcal D}}
\newcommand{\cM}{{\mathcal M}}
\newcommand{\cP}{{\mathcal P}}
\newcommand{\cR}{{\mathcal R}}
\newcommand{\cS}{{\mathcal S}}
\def\CircleArrowright{\ensuremath{%
  \reflectbox{\rotatebox[origin=c]{180}{$\circlearrowright$}}}}
\begin{document}

\title[Non-existence of  translation-invariant derivations]{Non-existence of translation-invariant derivations on algebras of   measurable functions}

\author[Ber]{Aleksey Ber}
\address{Department of Mathematics, National University of Uzbekistan, Vuzgorodok,
100174, Tashkent, Uzbekistan}
\email{aber1960@mail.ru}

\author[Huang]{Jinghao Huang}
\address{School of Mathematics and Statistics, University of New South Wales,
Kensington, 2052, Australia}
\email{jinghao.huang@unsw.edu.au}

\author[Kudaybergenov]{Karimbergen Kudaybergenov}
\address{Department of Mathematics, Karakalpak State University, Ch. Abdirov 1, Nukus 230113, Uzbekistan}
\email{karim2006@mail.ru}

\author[Sukochev]{Fedor Sukochev}
\address{School of Mathematics and Statistics, University of New South Wales,
Kensington, 2052, Australia}
\email{f.sukochev@unsw.edu.au}

\begin{abstract}
Let $S(0,1)$ be the $*$-algebra of all classes of  Lebesgue measurable functions on the unit interval $(0,1)$
and let $(\cA,\left\|\cdot \right\|_\cA)$ be a complete  symmetric  $\Delta$-normed  $*$-subalgebra of $S(0,1)$, in which simple functions are dense, e.g., $L_\infty (0,1)$, $L_{\log}(0,1)$, $S(0,1)$ and the Arens algebra $L^\omega (0,1)$ equipped with their natural $\Delta$-norms.
We show that there exists no non-trivial derivation $ \delta: \cA \to S(0,1)$ commuting  with all dyadic translations of the unit interval.
Let $\cM$ be a type $II$ (or $I_\infty$) von Neumann algebra, $\cA$ be its  abelian von Neumann subalgebra, let  $S(\cM)$ be the algebra  of all measurable operators affiliated with $\cM$.
 We show that any  non-trivial derivation $\delta:\cA\to S(\cA)$ can not be extended to a derivation on $S(\cM)$. In particular, we  answer an  untreated question  in \cite{BKS1}.
\end{abstract}

\subjclass[2010]{46L57, 47B47, 46L51, 26A24. \hfill Version~: \today.}

\keywords{Murray--von Neumann algebras; derivations; approximately differentiable functions; dyadic translations.}

\maketitle


\section{Introduction}

Let
$\mathcal{A}$ be an algebra over the field of complex numbers.
 A linear operator
$\delta:\mathcal{A}\rightarrow \mathcal{A}$ is called a
\textit{derivation} if $\delta$ satisfies the Leibniz rule, i.e.,  $\delta(xy)=\delta(x)y+x\delta(y)$ for all $x, y\in
\mathcal{A}$.
The theory of derivations  is an important and well studied part of the general theory
 of operator algebras, with significant applications in mathematical physics (see, e.g.~\cite{Bra}, \cite{Sak2}).
One of the most  important examples of derivations is  the usual differential operator $\frac{d\cdot}{dt}$ on the algebra $D(0,1)$ consisting of all classes in $S(0,1)$ which contain    functions  having finite derivative almost everywhere in   $(0,1).$
In \cite{BSCh06} (see also \cite{BSCh04}),
 the problem of existence of non-trivial
derivations   in the setting  of von Neumann regular
commutative algebras was considered.
As an  application, it  was established in \cite{BSCh06}  that the algebra $S(0, 1)$ of all measurable complex
functions on the interval $(0, 1)$ admits non-trivial derivations \cite[Theorem 3.1]{BSCh06} (see also  \cite[Remark 6.3]{Kusraev} for an alternative proof).
 In particular, it is established in \cite[Theorem 3.1]{BSCh06} that
 there exist  derivations on the algebra $S(0,1)$ of all classes of   measurable functions on~$(0,1)$ which extend the derivation
 $\frac{d\cdot }{dt}$ on the algebra $D(0,1)$.
A natural question is
\begin{quote}
\emph{what properties of $\frac{d\cdot}{dt}$ on $D(0,1)$   are shared by its  extension?}
\end{quote}


A very important property of $\frac{d\cdot}{dt}$  is the translation-invariance  property, which has been widely studied since the 1970s.
In particular, S. Sakai \cite[Proposition 1.17]{Sakai77}  proved that a   closed
derivation on $C(\mathbb{T})$ commuting with translations by elements of $\mathbb{T}$ is a constant
multiple of $\frac{d\cdot}{dt }$, where $\mathbb{T}$ is the one-dimensional torus.

Let $x\in S(0,1)$.
Set
\begin{align}\label{defalpha}
(\alpha_n(x))(t) =x\left(\left\{t - \frac{1}{2^n}\right\}\right), \qquad n\in \mathbb{N},
\end{align}
where $\{t\}$ stands for the fractional part of the number $t$.
Note $\{\alpha_n\}$ generates   the group $G$ of dyadic-rational  translations of $S(0,1)$ (see Section \ref{3.2} for the definition), 
which is a
subgroup   in  the group $Aut(S(0,1))$ of all automorphisms of $S(0,1)$.

It is well-known (see Section \ref{AP}) that  the (approximately-)differential operator~$\frac{d\cdot }{dt}$ (respectively, $\partial_{AD}$) is translation-invariant on $D(0,1)$ (respectively, the algebra $AD(0,1)$ of all approximately differentiable functions on $(0,1)$).
We are interested in studying the translation-invariance property of derivations on the larger algebra  $S(0,1)$:
\begin{equation*}
 \begin{tikzpicture}
\node at (7 ,7) {$D(0,1)\subset AD(0,1)\ \subset \ \  S(0,1)\ \,\, \   $};
\node at (7,5) {$S(0,1) ~= \     S(0,1) \ \ \subset \ \   S(0,1)\,\   \ $};
\node at (6,7.7) {$  \alpha_n ~ \ \  \ \ ~ \ \ ~ \ ~ \ \ \  \alpha_n   ~ \,\ ~ \ ~$};
\node at (6,7.4) {$  \CircleArrowright  ~  \ \ \ \ ~ \ \ ~ \ ~ \ \ \  \CircleArrowright   ~ \,\ ~ \ ~$};
\node at (6,4.6) {$  \circlearrowleft ~ \ \  \ \ \ ~ \ ~ \ ~ \ \ \ \ \circlearrowleft  ~ \,\ ~ \ ~$};
\node at (6,4.3) {$ \alpha_n ~ \ \ \  \ \ ~ \ ~ \ ~ \ \ \ \ \alpha_n  ~ \,\ ~ \ ~$};
\node at (4.8,6) {$\frac{d}{dt}$};
\node at (5.8,6) {$\subset$};
\node at (6.9,6) {$\partial_{AD}$};
\node at (7.8,6) {$\subset$};
\node at (8.4,6) {$\delta$};
\draw [-stealth] (5,6.7)--(5,5.3);
\draw [-stealth] (6.5,6.7)--(6.5,5.3);
\draw [-stealth] (8.5,6.7)--(8.5,5.3);
\end{tikzpicture}
\end{equation*}
where $\delta|_{AD(0,1)}=\partial _{AD}$ and $\partial_{AD}|_{D(0,1)}=\frac{d\cdot }{dt}$.

The  main result of the present paper  is an interesting   property of derivations on~$S(0,1)$,
which shows that non-trivial derivations on $S(0,1)$ do not commute with all $\alpha_n$.
This is in strong contrast with the result by Sakai\cite{Sakai77}. 


\begin{theorem}\label{th11}
Let $(\cA, \left\|\cdot\right\|_\cA)$ be a complete symmetric $\Delta$-normed   $*$-subalgebra of $S(0,1 )$
in which simple functions are dense in the $\left\|\cdot \right\|_\cA$-norm topology.
Let $\delta$ be a derivation from $\cA$ into $S(0,1 )$ commuting with all $\alpha_n$, $n\in \mathbb{N}$.   Then  $\delta$ is trivial.
In particular, the approximately differential operator
$\partial_{AD}$
has no translation-invariant extension as a derivation on the  algebra $S(0,1)$.
\end{theorem}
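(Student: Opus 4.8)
The plan is to combine three ingredients — the vanishing of $\delta$ on idempotents, a ``locality'' property, and the eigenvector structure of the exponentials under the dyadic translations $\alpha_n$ — and then to use completeness together with the density of simple functions to eliminate the single remaining scalar degree of freedom. First I would record two elementary reductions. Since every characteristic function $p=\chi_E$ is idempotent, $\delta(p)=\delta(p^2)=2p\,\delta(p)$, whence $p\,\delta(p)=0$ and therefore $\delta(p)=0$; by linearity $\delta$ annihilates every simple function. The same identity gives the locality relation $\delta(\chi_E x)=\chi_E\,\delta(x)$ for all $x\in\cA$ and all measurable $E$, so that $\delta(x)|_E$ depends only on $x|_E$ and, in particular, functions supported on a dyadic interval are sent to functions supported on that same interval.

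The engine of the argument is that each exponential $e_m(t)=e^{2\pi i m t}$ (which lies in $L_\infty\subseteq\cA$) is a joint eigenvector of all the $\alpha_n$, namely $\alpha_n(e_m)=e^{-2\pi i m/2^n}e_m$. Applying $\delta$ and using $\delta\alpha_n=\alpha_n\delta$ shows that $\delta(e_m)$ is again a joint eigenvector with the \emph{same} eigenvalues, so that $h_m:=\delta(e_m)\,\overline{e_m}\in S(0,1)$ is invariant under every $\alpha_n$. Because the dyadic rationals are dense in the circle, their translation action on $(0,1)$ is ergodic, so any such invariant element is a.e.\ constant; hence $\delta(e_m)=c_m\,e_m$ for scalars $c_m$, and the Leibniz rule applied to $e_m e_{m'}=e_{m+m'}$ forces $c_{m+m'}=c_m+c_{m'}$, i.e.\ $c_m=mc$ with $c:=c_1$. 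A parallel computation governs the sawtooth functions $f_n(t)=\{2^n t\}$: for every $m$ the difference $\alpha_m(f_n)-f_n$ is a simple function, so $\delta(f_n)$ is $\alpha_m$-invariant for all $m$, hence constant; and the exact relation $f_n=2^n f-j_n$ (with $f(t)=t$ and $j_n(t)=\lfloor 2^n t\rfloor$ simple) yields $\delta(f_n)=2^n c'$, where $c':=\delta(f)$ is itself constant for the same reason.

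It remains to prove that $c$ (equivalently $c'$) vanishes, and this I expect to be the \textbf{main obstacle}. One cannot appeal to continuity: $S(0,1)$ carries only the (non-locally-convex) measure topology, and everywhere-defined \emph{discontinuous} derivations annihilating all simple functions are known to exist, so the density hypothesis by itself forces nothing. What the computation does give is rigidity: choosing simple $s_k\to e_1$ in $\|\cdot\|_\cA$ yields $\delta(e_1-s_k)=c\,e_1$ while $e_1-s_k\to 0$, so the invertible element $c\,e_1$ lies in the separating space of $\delta$ (the rescaled sawtooths $\delta(2^{-n}f_n)=c'\cdot 1$ give the same information). Taken alone this shows only that $c\neq 0$ would make the separating space all of $S(0,1)$, which is consistent with the known wild examples and hence not yet a contradiction. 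The essential extra input must therefore come from translation invariance interacting with the self-similar generation of the $f_n$ by the doubling map $\sigma(t)=\{2t\}$ — note $f_{n+1}=2f_n-\chi_{\{f_n\ge 1/2\}}$, which reproduces $\delta(f_n)=2^n c'$ exactly — together with completeness: the plan is to show that a nonzero $c'$ makes the constant values $2^n c'$, carried by a uniformly $\cA$-bounded self-similar family, incompatible with the approximate differentiability of a single element of $\cA$ assembled from the $f_n$ by an absolutely convergent series. This measure-theoretic/self-similarity step, sensitive to the $\Delta$-norm rather than to any bound, is the delicate heart of the proof.

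Once $c=c'=0$ is established, $\delta$ annihilates every exponential and the identity function $f$; a final propagation argument, using locality together with the density of simple functions in the complete algebra $(\cA,\|\cdot\|_\cA)$, then extends the vanishing to all of $\cA$, so that $\delta=0$. The concluding ``in particular'' statement is immediate: since $\partial_{AD}\neq 0$, any translation-invariant derivation on $S(0,1)$ extending it would be a nontrivial $\alpha_n$-commuting derivation on $S(0,1)\supseteq AD(0,1)$, contradicting what has just been proved.
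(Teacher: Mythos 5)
Your preliminary reductions are sound and partly overlap with the paper: $\delta$ kills projections and hence all simple functions, one has the locality relation $\delta(\chi_E x)=\chi_E\delta(x)$, and the ergodicity of the dyadic translation action correctly forces $\delta(e_m)=mc\,e_m$ and $\delta(f_n)=2^nc'$. However, the proposal has two genuine gaps, and the second is fatal to the overall strategy. First, the step you yourself flag as the ``delicate heart'' --- showing $c=c'=0$ --- is never carried out; what you sketch (an absolutely convergent series $\sum a_n f_n$ with $\sum a_n2^n$ divergent) cannot work as stated, because $\delta$ is not continuous and so $\delta$ of an infinite sum is not the sum of the $\delta$'s. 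Second, even granting $c=c'=0$, the concluding ``propagation argument'' is not available: a discontinuous linear map is not determined by its values on a dense subspace, and you have already observed that non-trivial derivations annihilating all simple functions exist (the restriction to $L_\infty(0,1)$ of an extension of $\partial_{AD}$ to $S(0,1)$ is such an example). So knowing that $\delta$ vanishes on exponentials, sawtooths and simple functions gives no control whatsoever over $\delta$ on the rest of $\cA$. The information you extract is therefore consistent with $\delta\neq 0$, and no contradiction has been reached.

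The paper's proof avoids both problems by never trying to compute $\delta$ on specific generators and by replacing continuity with a purely algebraic finite-difference device. Starting from an arbitrary $f$ with $\delta(f)\neq 0$, ergodicity is used to amplify it to an element $g$ with $1\le\delta(g)\le\gamma$ on all of $(0,1)$; subtracting very good simple approximants gives $g_k$ with $\|g_k\|_\cA\le(2\gamma)^{-k^2}C_\cA^{-2^{k+1}}$ but still $1\le\delta(g_k)\le\gamma$. The crucial construction is the alternating sum $h_k=\gamma^{k^2}\sum_{i=1}^{2^k}(-1)^i\alpha_k^i(\chi_{[0,2^{-k})}g_k)$, which satisfies $\alpha_n(h_n)=-h_n$ and $\alpha_n(h_k)=h_k$ for $k>n$. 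Consequently, for $h=\sum_k h_k$ (convergent in $\cA$ by completeness), the difference $\alpha_n(h)-h$ equals the \emph{finite} sum $\sum_{k=1}^n(\alpha_n(h_k)-h_k)$, so $\delta(\alpha_n(h)-h)$ can be evaluated by linearity alone, yielding $|\alpha_n(\delta(h))-\delta(h)|\ge\gamma^{n^2}$ for all large $n$. This is impossible for a fixed $y=\delta(h)\in S(0,1)$, since $y$ is bounded on a closed set of measure greater than $3/4$ whose $\alpha_n$-translates must overlap it. If you want to salvage your eigenfunction approach, you would need to build an analogous element out of the $f_n$ (or $e_m$) for which the tail of the series is \emph{exactly} fixed by $\alpha_n$ --- at which point you are essentially reconstructing the paper's $h_k$.
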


In \cite{BKS1}, a  noncommutative analogue $AD(\cR)$
of the algebra of all almost everywhere approximately
differentiable functions on $(0,1)$ for the hyperfinite
type $II_1$ factor~$\cR$  was  introduced (all necessary definitions can be found in Section \ref{AP} below or in  \cite{BKS1}).
It was also established that the classical approximately  differential operator  on the algebra  $AD(0,1)$ admits an extension to a derivation $\delta$ from $AD(\mathcal{R})$ into $S(\mathcal{R})$ with $\delta|_{AD(0,1)} =\partial_{AD}$,
where $AD(0,1)$ can be viewed as 
 a subalgebra of $S(\cR)$ (see Section \ref{AP} below or \cite{BKS1}).

In \cite{BKS1}, the question  whether  the approximately differential operator $\partial_{AD}$ on $AD(\cR)$ has  an extension to the algebra $S(\cR) $
 was left unanswered.
Now using the main result of the paper \cite{BKS2},
we are able to answer this question.

\begin{prop}\label{1.2}
Let $\cM$ be a type $II$ (or $I_\infty$) von Neumann algebra, $\cA$ be its  abelian von Neumann subalgebra. Suppose that $\delta$ is a  derivation on $S(\cM)$ such that the range  $\delta|_{\cP(\cA)}$ of the projection lattice $\cP(\cA)$ of $\cA$ is contained in  $ S(\cA).$ Then $\delta$ vanishes on $S(\cA).$
In particular,
\begin{enumerate}
  \item[(i).]  any  non-trivial derivation $\delta:\cA\to S(\cA)$ can not be extended to a derivation on $S(\cM)$;
  \item[(ii).] there exists no derivation $\delta:S(\cR) \to S(\cR)$ such that $\delta |_{AD(0,1)} =  \partial_{AD},$ where  $\cR$ is  the   type $II_1$ hyperfinite factor.
\end{enumerate}

\end{prop}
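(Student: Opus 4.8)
The plan is to deduce Proposition~\ref{1.2} directly from the main theorem of \cite{BKS2}, which guarantees that for a von Neumann algebra $\cM$ of type $II$ or $I_\infty$ every derivation on $S(\cM)$ is inner; somewhat surprisingly, Theorem~\ref{th11} plays no role here. Accordingly I would start by fixing $a\in S(\cM)$ with $\delta=[a,\,\cdot\,]$, so that $\delta(x)=ax-xa$ for all $x\in S(\cM)$. The whole statement then reduces to showing that this $a$ commutes with $S(\cA)$.

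The heart of the matter is a one-projection-at-a-time commutator computation. Fix $p\in\cP(\cA)$ and put $w:=\delta(p)=ap-pa$. By hypothesis $w\in S(\cA)$; since $S(\cA)$ is commutative and contains $p$, the element $w$ commutes with $p$, and therefore its off-diagonal corners vanish, $pw(1-p)=(1-p)wp=0$, i.e.\ $w=pwp+(1-p)w(1-p)$. On the other hand, a commutator with a projection is always off-diagonal in the opposite sense: expanding with $p^{2}=p$ gives $pwp=pap-pap=0$ and $(1-p)w(1-p)=0$. Combining the two observations, all four corners of $w$ vanish, whence $\delta(p)=w=0$.

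Thus $a$ commutes with every projection of $\cA$. Because $\cA$ is an abelian von Neumann algebra, each $x\in S(\cA)$ is affiliated with $\cA$ and hence all of its spectral projections lie in $\cP(\cA)$; approximating $x$ in the measure topology by its bounded spectral truncations (which belong to $\cA$ and commute with $a$) shows that $a$ commutes with $x$. Consequently $\delta$ vanishes on the whole of $S(\cA)$, which is the principal assertion of the proposition. I expect this passage from projections to all of $S(\cA)$, resting on continuity of multiplication in the measure topology together with the affiliated-operator structure, to be the only step needing some care; all the genuine analytic difficulty is imported from \cite{BKS2}.

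Finally I would read off parts (i) and (ii). For (i), were a non-trivial derivation $\delta_{0}\colon\cA\to S(\cA)$ to extend to a derivation $\tilde\delta$ on $S(\cM)$, then $\tilde\delta(\cP(\cA))=\delta_{0}(\cP(\cA))\subseteq S(\cA)$, so the principal assertion forces $\tilde\delta|_{S(\cA)}=0$ and in particular $\delta_{0}=\tilde\delta|_{\cA}=0$, a contradiction. For (ii), take $\cA=L_\infty(0,1)\subseteq\cR$, whose projections are the characteristic functions $\chi_{E}$; by the Lebesgue density theorem each $\chi_{E}$ is approximately differentiable almost everywhere with $\partial_{AD}(\chi_{E})=0$, so $\chi_{E}\in AD(0,1)$. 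Hence any derivation $\delta$ on $S(\cR)$ with $\delta|_{AD(0,1)}=\partial_{AD}$ satisfies $\delta(\cP(\cA))=\{0\}\subseteq S(\cA)$; the principal assertion gives $\delta|_{S(\cA)}=0$, contradicting $\partial_{AD}=\delta|_{AD(0,1)}\neq 0$ (for instance $\partial_{AD}$ maps the function $t\mapsto t$ to $1$).
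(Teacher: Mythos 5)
Your proposal is correct and follows essentially the same route as the paper: invoke \cite{BKS2} to get innerness, kill $\delta$ on $\cP(\cA)$ using that $\delta(p)\in S(\cA)$ commutes with $p$ (the paper does this via the Leibniz identity $\delta(p)=2p\delta(p)$ rather than your corner decomposition of $[a,p]$, but this is the same algebraic fact), and then pass to all of $S(\cA)$ by measure-topology density of (combinations of) projections. The deductions of (i) and (ii) likewise match the paper's.
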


\section{Preliminaries}\label{sec2}

In this section,  we briefly list some necessary facts concerning algebras of measurable operators.

Let $H$  be a Hilbert space and let $B(H)$ be the $\ast$-algebra of all bounded linear operators
on $H.$ A von Neumann algebra $\mathcal{M}$  is a weakly closed unital $\ast$-subalgebra in $B(H).$ For details on von Neumann algebra theory, the reader is
 referred to \cite{Dixmier, KR1, KRII, SS, Tak}. General facts concerning measurable operators may be found in \cite{Nel,Segal} (see also \cite[Chapter IX]{Tak2} and the forthcoming book \cite{DPS}). For convenience of the reader, some of the basic definitions are recalled below.

\subsection{Murray-von Neumann algebras}\label{MvN}

Let $\cM$ be a semifinite von Neumann algebra.
A densely defined closed linear operator $x : \textrm{dom}(x) \to  H$
(here the domain $\textrm{dom}(x)$ of $x$ is a linear subspace in $H$) is said to be \textit{affiliated} with $\mathcal{M}$
if $yx \subset  xy$ for all $y$ from the commutant $\mathcal{M}'$  of the algebra~$\mathcal{M}.$

Denote by $P(\mathcal{M})$ the set of all projections in $\mathcal{M}.$ Recall that two projections $e, f \in  P(\mathcal{M})$ are called \textit{equivalent} (denoted by $p\sim q$) if there exists an element
$u \in \mathcal{M}$ such that $u^\ast  u = e$ and $u u^\ast  = f.$
For projections $e, f \in  \mathcal{M}$,
the notation $e \preceq  f$ means that there exists a projection $q \in  \mathcal{M}$ such that
$e\sim q \leq f.$ A projection $p \in \mathcal{M}$ is called \textit{finite}, if the
conditions $q \leq  p$ and $q$ is equivalent to $p$ imply that $q = p.$
If the unit $\mathbf{1}$ of the von Neumann algebra $\mathcal{M}$  is a finite projection in $\cM$, then $\mathcal{M}$ is called {\it finite.}

A linear operator $x$ affiliated with $\mathcal{M}$ is called \textit{measurable} with respect to $\mathcal{M}$ if
$\chi_{(\lambda,\infty)}(|x|)$ is a finite projection for some $\lambda>0.$ Here
$\chi_{(\lambda,\infty)}(|x|)$ is the  spectral projection of $|x|$ corresponding to the interval $(\lambda, +\infty).$

The development of  non-commutative integration theory was initiated by Murray and von Neumann \cite{MvN1} and by  Segal~\cite{Segal}, who introduced new classes of (not necessarily Banach) algebras of unbounded operators, in particular the algebra  $S(\mathcal{M})$ of all measurable operators affiliated with a von Neumann algebra $\mathcal{M}$.
The specific interest of the study of $S(\cM)$ when $\cM$ is a $II_1$ von Neumann algebra,  is also recorded
  von Neumann's   talk at  the International Congress of Mathematicians, Amsterdam, 1954 \cite[p.231--246]{RS}.
In the special case  when $\cM$  is   a finite von Neumann algebra, the algebra $S(\cM)$ of all densely defined closed operators affiliated with $\cM$ is frequently  referred as the \emph{Murray-von Neumann algebra} associated with $\cM$, which  is the algebra of all densely defined closed operators affiliated with $\mathcal{M}$ (see e.g. \cite{KL,KLT}).

Let $x, y \in  S(\mathcal{M}).$ It is well known that $x+y$ and
$xy$ are densely-defined and preclosed
operators. Moreover, the (closures of) operators $x + y, xy$ and $x^\ast$  are also in $S(\mathcal{M}).$
When
equipped with these operations, $S(\mathcal{M})$ becomes a unital $\ast$-algebra over $\mathbb{C}$  (see \cite{Ciach}). It
is clear that $\mathcal{M}$  is a $\ast$-subalgebra of $S(\mathcal{M}).$



From now on, we shall always assume that
  $\cM$ is  a finite von Neumann algebra equipped with a  faithful normal finite trace $\tau$.
Consider the    \textit{measure topology} $t_{\tau}$
on $S(\mathcal{M}),$ which is defined by
the following neighborhoods of zero:
$$
N(\varepsilon, \delta)=\{x\in S(\mathcal{M}): \exists \, e\in P(\mathcal{M}), \, \tau(\mathbf{1}-e)\leq\delta, \, xe\in
\mathcal{M}, \, \|xe\|_\infty\leq\varepsilon\},
$$
where $\varepsilon, \delta$
are positive numbers,    $\mathbf{1}$ is the unit in $\mathcal{M}$ and $\left\| \cdot \right \|_\infty$ denotes the operator norm on
$\mathcal{M}.$  The algebra $S(\mathcal{M})$ equipped with the measure topology is a topological algebra.

Let $x\in S(\mathcal{M})$ and let  $x=v|x|$ be the polar decomposition of $x$ \cite{DP2,LSZ}.
Then $l(x) =v v^\ast$ and  $r(x)=v^\ast v$ are left and right supports of the element  $x$, respectively.

We define the so-called rank  metric $\rho$ on $S(\mathcal{M})$  by setting
$$
\rho(x, y)=\tau( r((x-y)))=\tau(l(x-y)),\,\, x, y\in \mathcal{A}.
$$
 In fact, the rank-metric $\rho$ was firstly introduced in a general case of regular rings by von Neumann  in \cite{Neu37}, where it was shown that $\rho$ is  a metric on $S(\cM)$.
By~\cite[Proposition 2.1]{Ciach}, the algebra $S(\mathcal{M})$ equipped with the metric $\rho$ is a complete topological ring.
We note that if $\{x_n\} _{n=1}^\infty $ is a sequence of self-adjoint operators in $S(\cM)$ having pairwise orthogonal  supports, then $\sum_{n=1}^\infty  x_n$ exists in the topology induced by $\rho$ and also in measure.
In the special case when $\cM=L^\infty (0,1)$, the metric $\rho$ on the regular algebra $S(\cM)=S(0,1)$ is the same as in \cite{BSCh06,BKS2}.

\subsection{Symmetrically $\Delta$-normed  spaces of  measurable functions}\label{2.2}
For convenience of the reader, we recall the definition of $\Delta$-norms.
Let $E$ be a linear space over the field $\mathbb{C}$.
A function $\left\|\cdot\right\|$ from $E$ to $\mathbb{R}$ is a $\Delta$-norm, if for all $x,y \in E$ the following properties hold:
\begin{align*}
\left\|x\right\| \geqslant 0 , ~\left\|x\right\| = 0 \Leftrightarrow x=0 ;\\
\left\|\alpha x\right\| \leqslant \left\|x\right\|, ~\forall~\alpha \in \mathbb{C},  |\alpha| \le 1 ;\\
\lim _{\alpha \rightarrow 0}\left\|\alpha x\right\| = 0;\\
\left\|x+y \right\| \le C_E \cdot (\left\|x\right\|+\left\|y\right\|)
\end{align*}
 for a constant $C_E\geq 1$ independent of $x,y$.
The couple $(E, \left\|\cdot\right\|)$ is called a \emph{$\Delta$-normed} space.
We note that the definition of a $\Delta$-norm given above is the same as  in \cite{KPR}.
It is well-known that every $\Delta$-normed space $(E,\left\|\cdot\right\|)$ is metrizable  and conversely every  metrizable topological linear space can be equipped with a $\Delta$-norm  \cite[p.5]{KPR}.
In particular, when $C_E=1$, $E$ is called an \emph{$F$-normed} space \cite[p.3]{KPR}.
We note that every   $\Delta$-norm has an equivalent  $F$-norm \cite[Chapter 1.2]{KPR}.
We say that a  $\Delta$-norm $\left\|\cdot \right\|_\cA$ on a subspace $\cA$ of $S(0,1)$ is invariant with respect to translations, if for any translation $\alpha$ on $[0,1)$, we have $\alpha(\cA)\subseteq\cA$ (i.e., $\cA$ is translation-invariant) and  $\left\|\alpha(x)\right\|_\cA =\left\|x\right\|_\cA$.

We now come to the definition of the main object of this paper.
\begin{defn}
Let $\mathcal{E}$ be a linear subspace in $S(0,1)$
equipped with a $\Delta$-norm  $\left\|\cdot\right\|_{\mathcal{E}}$.
We say that
$\mathcal{E}$ is a \textit{symmetrically $\Delta$-normed  space}  if
for $x \in
\mathcal{E}$, $y\in S(0,1)$ and  $\mu(y)\leq \mu(x)$ imply that $y\in \mathcal{E}$ and
$\|y\|_\mathcal{E}\leq \|x\|_\mathcal{E}$ \cite{Astashkin,HS,HLS2017}.
Here, $\mu(f)$ stands for the decreasing rearrangement of $f\in S(0,1)$ \cite{LSZ,DP2}.
\end{defn}
Clearly,  symmetric $\Delta$-norms are invariant with respect to translation.
We note that convergence in the topology induced by any symmetric $\Delta$-norm  implies convergence in the measure topology on $S(0,1)$ \cite{Astashkin,HS,HLS2017}.
It is also known \cite{Astashkin,HS} that any symmetric $\Delta$-normed space contains all simple functions in $S(0,1)$.
In particular, if the symmetric $\Delta$-norm is order continuous (see \cite{HSZ18} for the definition), then all simple functions are dense in this symmetrically $\Delta$-normed  space \cite[Remark 2.9]{HSZ18}.

It is well-known \cite{HS,HM,Astashkin} that the $*$-algebra $S(0,1)$ can be equipped with a complete symmetric $\Delta$-norm.
Indeed,
by defining that
$$\left\|X\right\|_S = \inf_{t>0} [t+\mu(t;x)] ,~X\in S(0,1) ,$$
 we obtain a symmetric $F$-norm $\left\|\cdot\right\|_S$ on $S(0,1)$ \cite[Remark 3.4]{HS} (indeed, the constant for the quasi-triangle inequality is $1$).
Moreover, the topology induced by $\left\|\cdot\right\|_S$ is equivalent to the measure topology \cite[Proposition 4.1]{HS}.
Important examples of subalgebras of $S(0,1)$, such as
  $ L_\infty(0,1) $,    $ L_{\log}(0,1) $ \cite{DSZ2015} and the Arens algebra $ L^\omega (0,1) $ \cite{Arens},
can be equipped with complete symmetric  $F$-norms.

\subsection{Approximately differentiable functions}\label{AP}
Let us recall the concept of approximately differentiable functions.
Consider a Lebesgue measurable set $E\subset \mathbb{R},$
a measurable function $f: E \to \mathbb{R}$ and a point $t_0\in E,$  where $E$ has Lebesgue  density equal to $1.$ If the approximate limit
$$
f'_{ap}(t_0):=\textrm{ap}-\lim\limits_{t\to t_0}\frac{f(t)-f(t_0)}{t-t_0}
$$
exists and  it is finite, then it is called approximate derivative of the function $f$ at $t_0$ and the function is called approximately differentiable at $t_0$ (see \cite{Fed} for the details).
We note that all simple functions on $(0,1)$ are approximately differentiable.
However, it is clear that simple functions are not dense in $(AD(0,1),\rho)$.

\begin{rem}\label{1.1}	Let $AD(0,1)$ be the set of all classes $[f]\in S(0,1)$, for which $f$  have finite  approximate derivatives almost everywhere in $(0,1).$
 We note that the algebra $AD(0,1)$ is the $\rho$-completion   of the subalgebra in $S(0,1),$ generated by the algebra $C^{(1)}(0,1)$
of all continuously differentiable functions on $(0,1)$ and by the algebra  of all simple functions on $(0,1).$ Moreover, for any $x\in AD(0,1)$,  there exist a partition
of the unit
$\left\{\chi_{A_n}\right\}_{n\geq 1}$ and  a sequence $\{x_n\}_{n\geq 1}$ in $C^{(1)}(0,1)$  such that $\chi_{A_n}x=\chi_{A_n}x_n$ for all $n\geq1$ \cite[Proposition 4.7]{BKS1}.

Since the differential operator $\frac{d\cdot}{dt}$ commutes  with all dyadic-rational translations of the unit interval
$(0,1),$ it follows that the approximately differential operator   $\partial_{AD}:f \mapsto f'_{ap}$ also commutes with all $\alpha_n$, that is,
\begin{eqnarray*}
\partial_{AD} \circ \alpha_n =\alpha_n\circ\partial_{AD}.
\end{eqnarray*}
\end{rem}

In \cite{BKS1}, a  noncommutative analogue $AD(\cR)$
of the algebra of all almost everywhere approximately
differentiable functions on $(0,1)$ for the hyperfinite
type $II_1$ factor~$\cR$  was  introduced.
In particular, it is shown that
algebra $AD(\mathcal{R})$ is a proper subalgebra
of the Murray--von Neumann algebra $S(\mathcal{R})$ associated with $\cR$ and it is dense in the measure topology in  $S(\mathcal{R})$ but not in $\rho$-topology.
 Let $\mathcal{D}$ be the ``diagonal'' masa in~$\mathcal{R}.$
 We may identify $L_{\infty}(0,1)$ and $\mathcal{D}$
  via   a trace-preserving $*$-isomorphism $\pi$ of the algebras $L_{\infty}(0,1)$ and $\mathcal{D}$,
which uniquely   extends up to an one-to-one $*$-isomorphism between $ S(0,1)$ and $ S(\mathcal{D})$.
So, we may view $S(\mathcal{D})=S(0,1)$ as a $*$-subalgebra of~$S(\mathcal{R})$ \cite{BKS1}.
It is  established in \cite{BKS2} that the classical approximately differential operator on  $AD(0,1)$ admits an extension to a derivation $\delta$ from $AD(\mathcal{R})$ into $S(\mathcal{R})$ with $\delta|_{AD(0,1)} =\partial_{AD}$.

\section{Translation-invariance of derivations}
Let $\cA$ be a  $*$-algebra and
let
 $\delta$ be a derivation on   $\mathcal{A}.$
Set
$$
\delta_1(x)=\frac{\delta(x)+\delta(x^\ast)^\ast}{2},\,\, x\in  \mathcal{A}
$$
and
$$
\delta_2(x)=\frac{\delta(x)-\delta(x^\ast)^\ast}{2i},\,\, x\in  \mathcal{A}.
$$
Then, $\delta_1,$ $\delta_2$  are $\ast$-derivations (that is, $\delta_1(x^*)=\delta_1(x)^*$ and $\delta_2(x^*)=\delta_2(x)^*$) and $\delta=\delta_1+i \delta_2.$
Without loss of generality, from now on, we may assume that all derivations in this section  are $*$-derivations.
\subsection{The lack of extension of the  differential operator $\frac{d\cdot }{dt}$ up  to $S(\cR)$}

 Each element $a$ in an algebra $ \mathcal{A}$
implements a derivation $\textrm{ad}(a)$ on $\mathcal{A}$ defined as
$$\textrm{ad}(a)(x)=[a, x]=ax-xa,~x\in \mathcal{A}.$$
 Such derivations $\textrm{ad}(a)$
are called  \textit{inner derivations}.
 For  a detailed exposition of the theory of  derivations on operator algebras  we refer
to the monograph of Sakai~\cite{Sak2}.

It is known that every derivation on a von Neumann algebra $\cM$ is necessarily inner \cite{Kad66,Sak66}   (see \cite{Ber_S_1,Ber_S_2} for more general results for derivations with values into ideals of a von Neumann algebra).
However,
the properties of derivations of the
algebra $S(\mathcal{M})$ are far from being similar to those exhibited
by derivations on von Neumann algebras $\cM$.
In \cite{Ayupov}, Ayupov asked for  a full description of derivations on $S(\cM)$ (see also \cite{KL_survey,KL}, where Kadison and Liu asked a closely related question).
This long-standing open question has been treated in numerous papers (see e.g. \cite{AAK,BdPS,BCS14} and references therein).
The complete description for derivations on $S(\cM)$ for any von Neumann algebra  has been obtained  recently in  \cite{BKS2}.
In particular, for any type $II$ or $I_\infty$ von Neumann algebra $\cM$, derivations on $S(\cM)$ are automatically inner.
However,
for the  commutative von Neumann algebra $\mathcal{M}=L_\infty (0,1)$, the algebra $S(\mathcal{M})$  coincides with   $S(0,1)$, and the latter algebra admits non-trivial (and hence, non-inner) derivations \cite{BSCh04, BSCh06}.

In general, problems in the non-commutative setting are more complicated than their commutative counterparts.
However, due to the fact that there exist non-inner derivations on $S(0,1)$ \cite{BSCh06} and any derivations on $S(\cM)$ are inner when $\cM$ is a type $II$ (or $I_\infty$)  von Neumann algebra \cite{BKS2},
the proof yielding the lack of   extension of  differential operator up to $S(\cR)$ is much simpler than that for $S(0,1)$.

We now  present a proof of Proposition~\ref{1.2}.

\begin{proof}[Proof of Proposition~\ref{1.2}]
Let $\delta:S(\cM)\to S(\cM)$ be a derivation such that $\delta(\cP(\cA))\subset  S(\mathcal{A})$.
By \cite{BKS2},  $\delta$ is inner, in paricular, it is continuous in the measure topology. Since $\delta(\cP(\cA))\subset  S(\mathcal{A}),$
for any projection $e\in \cA$
we have
$\delta(e) = 0$.
Indeed, we have
	$$
	\delta(e)=\delta(e^2)=\delta(e)e+e\delta(e)=2e\delta(e).
	$$
	Multiplying the above equality  by $e$, we obtain $e\delta(e)=2e\delta(e).$ Hence, $e\delta(e)=0,$ and therefore $\delta(e)=0.$
 Thus, $\delta$ vanishes on the set of all linear combinations of mutually orthogonal  projections from $\cA.$
Since $\delta$ is continuous in measure  and  the set of all linear combinations of mutually orthogonal  projections  from $\cA$ is dense in measure in the real part $S(\cA)_h,$ it follows that $\delta$ also vanishes on $S(\cA)_h.$
By linearity of derivations, $\delta$   vanishes on $S(\cA)$.

Let $\cR$ be the hyperfinite type $II_1$ factor and $\cD$ be its diagonal masa.
Recall that  $\partial_{AD}$ maps $AD(\cD)(\cong AD(0,1))$ into $S(D)(\cong S(0,1))$ and  $P(\cD)\subset AD(\cD)$ (see Section \ref{AP}).
Let $\delta$ be a derivation on $S(\cR)$ as an extension of $\partial_{AD}$.
Setting $\cM =\cR$ and $\cA=\cD$, the first assertion of the proposition yields that $\partial_{AD}=\delta|_{AD(\cD)}=0$, which is a contradiction.
\end{proof}
 We note that for a derivation  vanishing  on the abelian subalgebra $\cA$, there exist non-trivial  extensions of this derivation on $S(\cM)$.
\begin{exam}Let $\cR$ be the hyperfinite $II_1$ factor and let $\cD$ be its diaganoal masa.
Recall  that every derivation $\delta:S(\cR)\to S(\cR)$ is implemented by an element in $S(\cR)$ \cite{BKS2}.
Let $a\in \cD\setminus \mathbb{C}{\bf 1}$.
We define a derivation $ad(a)(x)$, $x\in S(\cR)$.
In particular, $ad(a)=0$ on $\cD$ and therefore on $S(\cD)$.
However, it is non-trivial on $S(\cR)$.
Indeed, if $ad(a)$ is trivial on  $S(\cR)$, then $a$ is in $\cR'$. However, $\cR$ is a factor. Hence, $a\in \mathbb{C}{\bf 1}$, which is a contradiction with the assumption of $a$.
\end{exam}
 
\subsection{The proof of Theorem \ref{th11}}\label{3.2}

From now on, we concentrate on the   algebra  $S(0,1)$ of all classes of  Lebesgue measurable functions on $(0,1)$.

Recall that  $G $  is the group of all automorphisms of $S(0,1)$ generated by the dyadic translations of the unit interval $(0, 1),$ that is,
any element $\alpha$ of $G$ is defined as follows
\begin{align}\label{alphar}
\alpha(x)(t)=x\left(\left\{t-r\right\}\right),\,\,\, x\in S(0,1),\,\, t\in (0, 1),
\end{align}
where $r$ is the dyadic number from $[0, 1)$ and $\{a\}$ is the fractional part of the real number  $a$.
We say  that a derivation $\delta$ of $S(0,1)$ commutes  with $G$ or $G$-invariant, if
$$
\alpha \circ \delta =\delta \circ \alpha
$$
for all $\alpha\in G.$

The following theorem is the main result of the present section.

\begin{theorem}\label{thmmain}
Let $(\cA, \left\|\cdot\right\|_\cA)$ be a complete symmetric $\Delta$-normed   $*$-subalgebra of $S(0,1 )$
in which simple functions are dense in the $\left\|\cdot \right\|_\cA$-norm topology.
Let $\delta$ be a derivation from $\cA$ into $S(0,1 )$ commuting with $G.$  Then  $\delta$ is trivial.
\end{theorem}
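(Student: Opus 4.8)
The plan is to reduce the assertion to a closability (automatic-continuity) statement, to use the ergodicity of the dyadic action to determine the shape of $\delta$, and only then to remove the discontinuity. Writing $\delta=\delta_1+i\delta_2$ as at the start of this section, I may assume $\delta$ is a $*$-derivation. \emph{Step 1 (vanishing on projections).} For a projection $e=\chi_A\in\cA$ the Leibniz rule gives $\delta(e)=\delta(e^2)=2e\delta(e)$; multiplying by $e$ yields $e\delta(e)=0$, hence $\delta(e)=0$, exactly as in the proof of Proposition~\ref{1.2}. By linearity $\delta$ annihilates every simple function, and consequently $\delta(sx)=s\delta(x)$ for every simple $s$ and every $x\in\cA$. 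Since simple functions are $\|\cdot\|_\cA$-dense, writing $x=\lim_k s_k$ with $s_k$ simple shows $x-s_k\to0$ in $\cA$ while $\delta(x-s_k)=\delta(x)$, so the \emph{entire range of $\delta$ lies in its separating space}
\[
N=\bigl\{y\in S(0,1):\ \exists\,(x_k)\subset\cA,\ \|x_k\|_\cA\to0,\ \delta(x_k)\to y\ \text{in measure}\bigr\}.
\]
Thus the theorem is equivalent to the closability statement $N=\{0\}$.

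\emph{Step 2 (rigidity from ergodicity).} The characters $e_k(t)=e^{2\pi ikt}$, $k\in\mathbb{Z}$, are simultaneous eigenvectors of the dyadic translations: $\alpha_r(e_k)=e^{-2\pi ikr}e_k$ for every dyadic $r$. Putting $g=\delta(e_1)$, the intertwining $\alpha_r\circ\delta=\delta\circ\alpha_r$ forces $\alpha_r(g)=e^{-2\pi ir}g$, so that $\alpha_r(g/e_1)=g/e_1$ for all dyadic $r$. Because the dyadic translations act ergodically on $((0,1),\mathrm{Leb})$---any invariant element has each non-zero Fourier mode killed by some factor $e^{-2\pi ikr}\ne1$---the quotient $g/e_1$ is a constant $c$, whence $\delta(e_1)=c\,e_1$ and, by the Leibniz rule, $\delta(e_k)=kc\,e_k$. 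Equivalently $\delta=\tfrac{c}{2\pi i}\tfrac{d}{dt}$ on trigonometric polynomials. This is the precise analogue in our setting of Sakai's theorem \cite{Sakai77} and shows that on the smooth part $\delta$ is pinned to a scalar multiple of $\partial_{AD}$.

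\emph{Step 3 (structure of the separating space).} $N$ is a linear subspace; it is closed in measure by a routine diagonal argument; it is an $S(0,1)$-submodule, since $\delta(sx)=s\delta(x)$ gives $sy\in N$ for simple $s$, and measure-closedness upgrades this to all multipliers in $S(0,1)$; and it is $G$-invariant, because each $\alpha_r$ is a $\|\cdot\|_\cA$-isometry commuting with $\delta$. Every measure-closed $S(0,1)$-submodule has the form $N=\chi_E\,S(0,1)$ for a measurable $E$, and $G$-invariance forces $\alpha_r(E)=E$ for all dyadic $r$. Invoking ergodicity once more, $|E|\in\{0,1\}$, so that either $N=\{0\}$ or $N=S(0,1)$.

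\emph{Step 4 (the crux: eliminating $N=S(0,1)$).} The main obstacle is precisely to exclude the maximal case, i.e.\ to prove that a $G$-invariant derivation cannot be totally discontinuous. I expect to resolve this by combining three ingredients that are unavailable for the wild derivations of \cite{BSCh06}: the completeness of $(\cA,\|\cdot\|_\cA)$, which makes a Baire-category / closed-graph argument available; the rigidity of Step 2, which controls $\delta$ on a separating family of smooth elements; and the fact that $S(0,1)$ is \emph{reduced} (von Neumann regular, hence without non-zero nilpotents). Concretely, the goal is to show that $N$ must be a \emph{nil} ideal of $S(0,1)$---using the derivation identity to force, for each $y\in N$, a relation such as $y^2=0$ out of sequences $x_k\to0$ with $\delta(x_k)\to y$---which in a reduced ring gives $N=\{0\}$. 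Once closability is in hand, $\delta$ vanishes on the dense set of simple functions and hence $\delta\equiv0$, which is the assertion of Theorem~\ref{thmmain}; in particular $c=0$ in Step 2, and $\partial_{AD}$ has no $G$-invariant extension to $\cA$. The delicate point, and where I expect the real work to lie, is establishing the nilpotency (equivalently the closability) without smuggling in the continuity one is trying to prove: the honest averagings built from dyadic translations commute with $\delta$ but fail to regularise, while the genuinely regularising convolutions use non-dyadic translations and therefore break the intertwining with $\delta$.
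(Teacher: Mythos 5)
Your Steps 1--3 are sound reductions: the vanishing of $\delta$ on simple functions, the consequence that the range of $\delta$ sits inside the separating space $N$, the identification of $N$ as a measure-closed, $G$-invariant ideal of the form $\chi_E S(0,1)$ with $|E|\in\{0,1\}$ by ergodicity, and the Sakai-type rigidity $\delta(e_k)=kce_k$ are all correct (and the last is a nice observation not in the paper). But the proof stops exactly where the theorem begins: Step 4 is a declared intention, not an argument. Worse, the plan you sketch for it is unlikely to close. Two of your three ``ingredients unavailable for the wild derivations of \cite{BSCh06}'' --- completeness of $(\cA,\|\cdot\|_\cA)$ and reducedness of $S(0,1)$ --- are in fact available for them: $S(0,1)$ itself, with the symmetric $F$-norm $\|\cdot\|_S$, is a complete symmetric $\Delta$-normed algebra in which simple functions are dense, it is von Neumann regular, and it nevertheless carries non-trivial (hence totally discontinuous) derivations. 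So no Baire-category, closed-graph, or ``separating space is nil'' argument of Johnson--Sinclair type can work here without using $G$-invariance in an essential quantitative way, and you do not indicate how the rigidity of Step 2, which only pins $\delta$ down on trigonometric polynomials, would propagate to force $1\notin N$. The claim that the Leibniz identity forces $y^2=0$ for $y\in N$ is asserted without any mechanism; the classical arguments producing such relations apply to the separating spaces of homomorphisms or to derivations on Banach algebras, neither of which is the situation here.

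The paper's proof avoids closability altogether and is a direct quantitative contradiction. Assuming $\delta\neq0$, one normalises to get $g\in\cA$ with $1\le\delta(g)\le\gamma$ (using ergodicity of $G$ to spread a set where $\delta(f)$ is bounded below over all of $(0,1)$), subtracts simple functions $s_k$ to get $g_k=g-s_k$ with $\|g_k\|_\cA$ extremely small but still $1\le\delta(g_k)\le\gamma$, and then forms the alternating sums $h_k=\gamma^{k^2}\sum_{i=1}^{2^k}(-1)^i\alpha_k^i(\chi_{[0,2^{-k})}g_k)$. These satisfy $\alpha_n(h_n)=-h_n$ and $\alpha_n(h_k)=h_k$ for $k>n$, and the norm control makes $h=\sum_k h_k$ converge in $\cA$. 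Commutation with $G$ then forces $|\alpha_n(\delta(h))-\delta(h)|\ge\gamma^{n^2}$ for all large $n$, which is impossible for a single fixed function $\delta(h)\in S(0,1)$, since $\alpha_n(y)-y$ is bounded on a set of measure $>\tfrac12$ uniformly in $n$. Note how this uses density of simple functions, the symmetric $\Delta$-norm, completeness, and $G$-invariance all at once in the construction of $h$ --- this is the quantitative input your Step 4 is missing.
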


From now on, we always assume that
 $\delta$ is a non-trivial  $*$-derivation from $\cA$ into  $S(0,1)$ commuting with $G.$
We construct below an element $h$ from $\cA $
such that  
 $\delta(h)\notin S(0,1)$. 
After that, we will present some properties of $h$,
which allow us to  show that
the $G$-invariance of $\delta$ fails at this element.

For any real-valued function $f\in S(0,1)$, we have $\delta(f)=\delta(f)^*$, that is, $\delta(f)$ is also a real function on $(0,1)$.
Since $\delta$ is a non-trivial $*$-derivation, there exists  an element $f\in \cA$ such that $\delta(f)\neq 0.$
If necessary, replacing $f$ with $-f$,  we can assume that the positive part of $\delta(f)$ is non zero.
Then we can find positive numbers $\lambda<\mu$ and  a measurable subset $A\subset \displaystyle (0,1) $ with a positive measure   such that
\begin{align*}
\lambda \chi_A\leq \chi_A \delta\left(f\right)\leq \mu  \chi_A.
\end{align*}
Note  that $\delta(e)=0$ for any projection $e\in \cA$ (see e.g. \cite[Prop. 2.3. (iii)]{BSCh06}).
Therefore,   $\delta\left(\chi_Af\right)=\chi_A \delta\left(f\right)$ and we obtain that
\begin{align*}
\lambda \chi_A\leq \delta\left(\chi_Af\right)\leq \mu  \chi_A.
\end{align*}
By replacing $f$ with $\displaystyle \frac{1}{\lambda}\chi_Af$,
we may  assume that $f$ is a function such that
\begin{align}\label{fff}
\chi_A\leq \delta\left(f\right)\leq \gamma
  \chi_A,
\end{align}
where $\gamma:=\frac{\mu}{\lambda }>1.$

Since there are countably many  dyadic numbers,
it follows that we can numerate all dyadic translations of $(0,1)$ as $\beta_n$, $n=0,1,\cdots$, i.e.,
 $G=\left\{\beta_n:n\geq 0\right\}$ ($\beta_0$ is an identical mapping on  $(0,1)$). Set
$$
B=\bigcup\limits_{n\geq 0}\beta_n(A).
$$
Observe that  for each $k\ge  0,$ we have  $\beta_k(B)=B $. 
Since the group  $G$ acts on  $(0,1)$ ergodically (see \cite[p. 927]{KRII}), it follows that
$m(B)=0$ or $m((0,1)\setminus B)=0.$ Since $m(B)>0,$ it follows that   \begin{align}\label{mb0}
m((0,1)\setminus B)=0.
\end{align}
Set
\begin{align*}
B_0 & =\beta_0(A)=A,\\
B_n &= \beta_n(A) \setminus\bigcup\limits_{k=0}^{n-1}\beta_k(A),~ n\geq 1.
\end{align*}
We note that all $B_n,\, n\ge 0,$ are pairwise  disjoint (note that $B_n$ may be the empty set for some $n$) and
$$
B=\bigcup_{n\geq 0} B_n.
$$
Taking into account into \eqref{mb0} and the last equality, we obtain that
$$
\sum_{n\geq 0} \chi_{B_n}=\chi_{[0,1)}.
$$
  Denote by $\widetilde{\beta}_k$ an automorphism of $S(0,1)$ generated by the translation $\beta_k,$ $k\ge 0$, that is, $\widetilde{\beta}_k(x)=x\circ \beta_k^{-1},\, x\in S(0,1).$  Set
\begin{align}\label{galphak}
g=\sum\limits_{k=0}^\infty \chi_{B_k}\widetilde{\beta}_k(f),
\end{align}
where the series is considered in $\rho$-topology (see Section \ref{MvN}).
Using~\eqref{fff} and the translation-invariance of $\delta$,  we have
  \begin{align}\label{BNBOUND}
  \chi_{B_n} \leq \chi_{B_n}\widetilde{\beta}_n\left(\delta(f)\right) =\chi_{B_n}  \delta(\widetilde{\beta}_n (f))  \leq \gamma\chi_{B_n }
   \end{align}
   for  all $n\geq 0.$
Taking into account that  $\delta$ is $\rho$-continuous \cite[Proposition 2.4]{BKS1}, we obtain  that
$$
\delta(g) \stackrel{\eqref{galphak}}{=} \delta\left(\sum\limits_{k=0}^\infty \chi_{B_k}\widetilde{\beta}_k(f)\right) =\sum _{k=0}^\infty \chi_{B_k} ( \delta(\widetilde{\beta}_k (f)))=\sum _{k=0}^\infty \chi_{B_k} \widetilde{\beta}_k( \delta(f)).
$$
Multiplying the above equality by $\chi_{B_n}$ and applying  \eqref{BNBOUND}, we have
$$\chi_{B_n}\le  \chi_{B_n} \delta(g )  =\chi_{B_n} \widetilde{\beta}_n(\delta(f)) \le \gamma \chi_{B_n} $$
Then, summing the above inequalities over all $n$, we obtain that
\begin{align}\label{1deltaggamma}
1 \leq \delta(g)\leq \gamma.
\end{align}

Due to  the assumption that simple functions are dense in $\cA$, for every $k\geq 1$,
there exists  a simple function $s_k\in S(0,1)$  such that
$\displaystyle    \left\| g-s_k \right \|_\cA\leq \frac{1}{(2\gamma)^{k^2}C_\cA^{2^{k+1}}}.$
Setting    $g_k:=g-s_k$,  we have that
\begin{align}\label{nn}
\left\|g_k \right\|_\cA  \leq \frac{1}{(2\gamma )^{k^2}C_\cA^{2^{k+1}}},
\end{align}
where $C_\cA$ is the constant for the quasi-triangle inequality for $\left\|\cdot \right\|_\cA$.
By \eqref{1deltaggamma} and  due to the fact that $\delta(s_k)=0$ (see e.g. \cite[Proposition 2.3,  (ii)]{BSCh06}), we obtain that
\begin{align}   \label{gk}
   1 \leq \delta(g_k)\leq \gamma, ~\forall k \ge 1.
\end{align}
Recall that for any $k\ge 1$, we have
defined automorphism $\alpha_k$ of $S(0,1)$ by
\begin{align*}
(\alpha_k(x))(t) =x\left(\left\{t - \frac{1}{2^k}\right\}\right).
\end{align*}
By the  definition of $\alpha_k$ (see \eqref{defalpha}), it is clear that
  $\alpha_n=\alpha_k^{2^{k-n}}$, $k>n$.
  Here,
we denote the decomposition of $\alpha_m$ with itself $i$-times by $\alpha_m^i $, that is, $\alpha_m^i=\alpha_m^{i-1}$ for $i\ge 2$ with $\alpha_m^1 =\alpha_m$.
Set
\begin{align}\label{hk}
h_k=\gamma^{k^2}\sum\limits_{i=1}^{2^k}(-1)^i\alpha_k^i\left(\chi_{\left[0,\frac{1}{2^k}\right)}g_k\right).
\end{align}
Appealing to the definition  of symmetric  $\Delta$-norms, we obtain that
$$
\left\|\alpha_k^i\left(\chi_{\left[0,\frac{1}{2^k}\right)}g_k\right)\right\|_\cA \le \left\|g_k \right\|_\cA  \stackrel{\eqref{nn}}{\leq} \frac{1}{(2\gamma)^{k^2}C_\cA^{2^{k+1}}},$$
for all $1\le i\le 2^k.$
Hence, by the quasi-triangle inequality, we obtain that
\begin{align*}
\left\| h_k \right \|_\cA  \leq C_\cA^{2^k}\sum _{i=1}^{2^k }\frac{ \gamma^{k^2} }{(2\gamma)^{k^2}C_\cA^{2^{k+1}}}=\frac{1}{2^{k^2-k } C_\cA^{2^k}}.
\end{align*}
Note that
$$\left\| \sum _{k=l}^n  h_k \right\|_\cA \le  \sum _{k=l}^n C_\cA ^ k\left\|  h_k \right\|_\cA \le       \sum _{k=l}^n C_\cA ^ k \frac{1}{2^{k^2-k } C_\cA^{2^k} } \le  \sum _{k=l}^n \frac{1}{2^{k^2-k  } } \to 0 \mbox{ as } l\to \infty. $$
Thus,   the   series
\begin{align}\label{hhh}
h:=\sum\limits_{k=1}^\infty h_k
\end{align}
 converges in $(\cA, \left\|\cdot \right\|_\cA)$.

Having constructed  the element $h\in S(0,1)$,
we shall now show  that $h$ is the required element,  at which   the $G$-invariance of $\delta$ fails.
Before proceeding to the proof of Theorem \ref{thmmain}, we collect some   relations between $h_k$ and $\alpha_n$, $k>n$.

From now on, the notations $h_k, k\in \mathbb{N}$, and $h$ always  stand for the functions defined in \eqref{hk} and \eqref{hhh}, respectively.

The following lemma shows that the elements $h_k$, $k\ge 1$, are well-behaved with respect to translations $\alpha_n$, $n\ge 1$.
\begin{lemma}
Let $n\ge 1$.
	Let $\alpha_n$ be  defined as  \eqref{defalpha} and $h_k$, $k\in \mathbb{N}$,  be as defined in \eqref{hk}. Then
	\begin{eqnarray}\label{aln}
	\alpha_n (h_n) =-h_n
	\end{eqnarray}
and
\begin{eqnarray}\label{alk}
	\alpha_n (h_k) =h_k
	\end{eqnarray}
	for all $k> n.$
\end{lemma}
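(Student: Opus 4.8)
The plan is to prove both identities by elementary reindexing in the defining alternating sum \eqref{hk}, exploiting two structural facts about the dyadic translations. First, since $\alpha_k$ is translation by $1/2^k$, iterating it $2^k$ times is translation by $1$, i.e. $\alpha_k^{2^k}=\id$; because $2^k$ is even, the map $i\mapsto(-1)^i\alpha_k^i(x)$ is therefore periodic in $i$ with period $2^k$, so a sum of such terms over any block of $2^k$ consecutive integers is independent of where the block begins. Second, the relation $\alpha_n=\alpha_k^{2^{k-n}}$ recorded just before the lemma lets me rewrite $\alpha_n$ as a power of $\alpha_k$ whenever $k>n$. Throughout I abbreviate $u_k:=\chi_{[0,1/2^k)}g_k$, so that $h_k=\gamma^{k^2}\sum_{i=1}^{2^k}(-1)^i\alpha_k^i(u_k)$.

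For \eqref{aln} I apply $\alpha_n$ to $h_n$ and shift the summation index by one. With $j=i+1$ this yields
$$\alpha_n(h_n)=\gamma^{n^2}\sum_{i=1}^{2^n}(-1)^i\alpha_n^{\,i+1}(u_n)=-\gamma^{n^2}\sum_{j=2}^{2^n+1}(-1)^{j}\alpha_n^{\,j}(u_n),$$
the overall minus sign coming from $(-1)^{j-1}=-(-1)^j$. The range $\{2,\dots,2^n+1\}$ is a block of $2^n$ consecutive integers, so by the periodicity noted above (using $\alpha_n^{2^n}=\id$ and $2^n$ even) the inner sum equals $\sum_{j=1}^{2^n}(-1)^j\alpha_n^j(u_n)$. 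The right-hand side is therefore $-h_n$, which is \eqref{aln}.

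For \eqref{alk} with $k>n$, I substitute $\alpha_n=\alpha_k^{2^{k-n}}$ and set $j=i+2^{k-n}$, obtaining
$$\alpha_n(h_k)=\gamma^{k^2}\sum_{i=1}^{2^k}(-1)^i\alpha_k^{\,i+2^{k-n}}(u_k)=\gamma^{k^2}\sum_{j=2^{k-n}+1}^{\,2^k+2^{k-n}}(-1)^{\,j-2^{k-n}}\alpha_k^{\,j}(u_k).$$
Since $k>n$ the shift $2^{k-n}$ is even, so $(-1)^{\,j-2^{k-n}}=(-1)^j$. The summand $j\mapsto(-1)^j\alpha_k^j(u_k)$ is periodic with period $2^k$, and the range $\{2^{k-n}+1,\dots,2^k+2^{k-n}\}$ is again a block of $2^k$ consecutive indices, so the sum equals $\sum_{j=1}^{2^k}(-1)^j\alpha_k^j(u_k)$. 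This identifies the right-hand side with $h_k$, proving \eqref{alk}.

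The computation carries no genuine obstacle: both identities reduce to a cyclic shift of the summation index in a periodic alternating sum, and the only points requiring care are the parities of $2^n$, $2^{k-n}$ and $2^k$ (all even in the relevant ranges) together with the two structural facts $\alpha_k^{2^k}=\id$ and $\alpha_n=\alpha_k^{2^{k-n}}$. The opposite signs in \eqref{aln} and \eqref{alk} are accounted for precisely by the single extra factor of $(-1)$ produced by the unit shift in the case $k=n$, which disappears once the shift $2^{k-n}$ becomes even for $k>n$.
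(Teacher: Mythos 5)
Your proof is correct and follows essentially the same route as the paper's: rewrite $\alpha_n$ acting on each summand as a power of $\alpha_k$ and reindex the alternating sum, with the sign flip for $k=n$ coming from the odd shift and the invariance for $k>n$ from the even shift $2^{k-n}$. You are in fact slightly more explicit than the paper, which leaves the final cyclic-shift/periodicity step (using $\alpha_k^{2^k}=\id$ and the evenness of $2^k$) implicit.
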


\begin{proof}
When $k=n$, we have
\begin{align*}
\alpha_n(h_n) &\stackrel{\eqref{hk}}{=}  \gamma^{n^2}\sum\limits_{i=1}^{2^n}(-1)^i\alpha_n\left(\alpha_n^i\left(\chi_{\left[0,\frac{1}{2^n}\right)}g_n\right)\right)=
\gamma^{n^2}\sum\limits_{i=1}^{2^n}(-1)^i\alpha_n^{i+1}\left(\chi_{\left[0,\frac{1}{2^n}\right)}g_n\right)\\
 &~=
 -\gamma^{n^2}\sum\limits_{i=1}^{2^n}(-1)^{i+1}\alpha_n^{i+1}\left(\chi_{\left[0,\frac{1}{2^n}\right)}g_n\right)=-h_n.
\end{align*}
Recall that
  $\alpha_n=\alpha_k^{2^{k-n}}$, $k>n$.
  Hence, we obtain that
\begin{align*}
\alpha_n(h_k) &\stackrel{\eqref{hk}}{=}    \gamma^{k^2}\sum\limits_{i=1}^{2^k}(-1)^i\alpha_n\left(\alpha_k^i\left(\chi_{\left[0,\frac{1}{2^k}\right)}g_k\right)\right)=
 \gamma^{k^2}\sum\limits_{i=1}^{2^k}(-1)^i\alpha_k^{2^{k-n}+i}\left(\chi_{\left[0,\frac{1}{2^k}\right)}g_k\right)\\
 &~=
\gamma^{k^2}\sum\limits_{i=1}^{2^k}(-1)^{2^{k-n}+i}\alpha_k^{2^{k-n}+i}\left(\chi_{\left[0,\frac{1}{2^k}\right)}g_k\right)=h_k.
\end{align*}
\end{proof}

We provide below uniform  estimates  for the differences between $\alpha_n (\delta(h_k))$ and  $\delta(h_k) $, $n, k\ge 1$.
Recall that $\delta$ is a derivation on $S(0,1)$ commuting with $G$.
\begin{lemma}\label{forn}
	Let $n\geq 1$ be fixed. For every  $k\geq 1$,
 we have\footnote{We note that  when $k>n$, we have $\alpha_n ( \delta(h_k)) -\delta(h_k)=\delta(\alpha_n (h_k)) -\delta(h_k)\stackrel{\eqref{alk}}{=}0$.}
		\begin{eqnarray*}
\left|\alpha_n(\delta(h_k))-\delta(h_k)\right|\leq 2\gamma^{k^2+1},
	\end{eqnarray*}
and
		\begin{eqnarray}\label{x_n_q_n^1}
\left|\alpha_n(\delta(h_n))-\delta(h_n)\right|\geq 2\gamma^{n^2}.
	\end{eqnarray}
\end{lemma}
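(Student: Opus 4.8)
The plan is to first compute $\delta(h_k)$ explicitly, then extract a sharp two-sided pointwise bound on $|\delta(h_k)|$, and finally read off both inequalities: the upper one from the triangle inequality and the lower one (the case $k=n$) from the sign-reversal identity \eqref{aln}.

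First I would apply $\delta$ to the defining formula \eqref{hk}. Since $\delta$ is linear and commutes with each $\alpha_k\in G$, and since $\delta(\chi_{[0,1/2^k)})=0$ yields $\delta(\chi_{[0,1/2^k)}g_k)=\chi_{[0,1/2^k)}\delta(g_k)$ via the Leibniz rule, I obtain
$$\delta(h_k)=\gamma^{k^2}\sum_{i=1}^{2^k}(-1)^i\,\alpha_k^i\!\left(\chi_{[0,1/2^k)}\delta(g_k)\right).$$
The translation $\alpha_k^i$ carries the support $[0,1/2^k)$ onto $[i/2^k,(i+1)/2^k)\pmod 1$, so as $i$ ranges over $1,\dots,2^k$ these supports are pairwise disjoint and exhaust $(0,1)$. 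By \eqref{gk} the factor $\chi_{[0,1/2^k)}\delta(g_k)$ takes values in $[1,\gamma]$ on its support, so on the $i$-th dyadic interval $\delta(h_k)$ equals $(-1)^i\gamma^{k^2}$ times a function valued in $[1,\gamma]$. Reading this across the whole partition gives the key estimate
$$\gamma^{k^2}\le|\delta(h_k)|\le\gamma^{k^2+1}\quad\text{a.e.\ on }(0,1),$$
with $\delta(h_k)$ of constant sign $(-1)^i$ on each interval $[i/2^k,(i+1)/2^k)$. I expect this bookkeeping — verifying that the translated supports partition $(0,1)$ and transporting the bound $1\le\delta(g_k)\le\gamma$ through it — to be the only real work in the proof; everything else is formal.

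With this pointwise bound in hand, both inequalities follow at once. For the upper bound I use that $\alpha_n$, being induced by a translation, preserves pointwise absolute values, so $|\alpha_n(\delta(h_k))|=\alpha_n(|\delta(h_k)|)\le\gamma^{k^2+1}$; the triangle inequality then gives $|\alpha_n(\delta(h_k))-\delta(h_k)|\le 2\gamma^{k^2+1}$ for every $k$ (and for $k>n$ the difference is in fact $0$ by \eqref{alk} together with the commutation of $\delta$ with $\alpha_n$, as recorded in the footnote). For the lower bound I invoke \eqref{aln}: since $\alpha_n(h_n)=-h_n$ and $\delta$ commutes with $\alpha_n$, I get $\alpha_n(\delta(h_n))=\delta(\alpha_n(h_n))=-\delta(h_n)$, whence $\alpha_n(\delta(h_n))-\delta(h_n)=-2\delta(h_n)$ and therefore $|\alpha_n(\delta(h_n))-\delta(h_n)|=2|\delta(h_n)|\ge 2\gamma^{n^2}$, as required.
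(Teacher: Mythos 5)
Your proposal is correct and follows essentially the same route as the paper: expand $\delta(h_k)$ using $G$-invariance and the vanishing of $\delta$ on the projection $\chi_{[0,1/2^k)}$, transport the bound $1\le\delta(g_k)\le\gamma$ from \eqref{gk} onto the disjoint dyadic intervals to get $\gamma^{k^2}\le|\delta(h_k)|\le\gamma^{k^2+1}$, and then conclude via the triangle inequality for the upper estimate and via $\alpha_n(h_n)=-h_n$ for the lower one. The only cosmetic difference is that you record the two-sided pointwise bound for all $k$ at once, whereas the paper states the upper bound for all $k$ and the lower bound only for $k=n$, which is all that is needed.
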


\begin{proof}
Recall that $\delta(e)=0$ for any projection $e\in \cA $ (see e.g. \cite[Prop. 2.3. (iii)]{BSCh06}, see also the proof of Proposition \ref{1.2}).
Using  \eqref{gk} we have
  \begin{align}\label{chi1}
  \chi_{\left[\frac{i}{2^k},\frac{i+1}{2^k}\right)} \leq \alpha_k^i\left(\delta(\chi_{\left[0,\frac{1}{2^k}\right)}g_k)\right)=
  \alpha_k^i\left(\chi_{\left[0,\frac{1}{2^k}\right)}\delta(g_k)\right)\leq \gamma\chi_{\left[\frac{i}{2^k},\frac{i+1}{2^k}\right)}
   \end{align}
   for all $i=1, \ldots, 2^k-1$ and
   \begin{align}\label{chi2}
  \chi_{\left[0,\frac{1}{2^k}\right)} \leq \alpha_k^{2^k}\left(\delta(\chi_{\left[0,\frac{1}{2^k}\right)}g_k)\right)\leq \gamma\chi_{\left[0,\frac{1}{2^k}\right)}.
   \end{align}
Since the derivation $\delta$ is  $G$-invariant, it follows that
 	\begin{align}\label{deltahk}
\delta(h_k)&\stackrel{\eqref{hk}}{=}\gamma^{k^2}\sum\limits_{i=1}^{2^k}(-1)^i\delta\left(\alpha_k^i\left(\chi_{\left[0,\frac{1}{2^k}\right)}g_k\right)\right)\nonumber\\
&=
\gamma^{k^2}\sum\limits_{i=1}^{2^k}(-1)^i\alpha_k^i\left(\delta\left(\chi_{\left[0,\frac{1}{2^k}\right)}g_k\right)\right)
	\end{align}
and, therefore, by \eqref{chi1} and \eqref{chi2}, we have
\begin{eqnarray*}
		\left|\delta(h_k)\right|\leq \gamma^{k^2+1}.
	\end{eqnarray*}
Hence, $\left|\alpha_n(\delta(h_k))\right|\leq \gamma^{k^2+1}.$
By the triangle inequality, we obtain that
\begin{eqnarray*}
		\left|\alpha_n(\delta(h_k))-\delta(h_k)\right|\leq 2\gamma^{k^2+1}.
	\end{eqnarray*}
On the other hand, by \eqref{deltahk}, summing \eqref{chi1}  over $i=1,2,\cdots,2^{n-1}$
 and \eqref{chi2}, we have
\begin{eqnarray*}
		\left|\delta(h_n)\right|\ge  \gamma^{n^2}.
	\end{eqnarray*}
Taking \eqref{aln} into account, i.e., $\alpha_n(h_n)=-h_n $,
we obtain that
\begin{eqnarray*}
		\left|\alpha_n(\delta(h_n))-\delta(h_n)\right|=\left|\delta(\alpha_n(h_n))-\delta(h_n)\right|= \left|-2 \delta(h_n)\right|\geq 2\gamma^{n^2},
	\end{eqnarray*}
which completes the proof.
  \end{proof}

In the next lemma,  we provide  a uniform estimate  for the sum of the difference between $\delta(h_k)$ and $\alpha_n (\delta(h_k))$ over all $k=1,\dots, n$.
\begin{lemma}\label{enenen}
There exists a number $N$ such that for any $n\ge N$, we have
	\begin{eqnarray}\label{2n}
		\left|\sum\limits_{k=1}^n \alpha_n(\delta(h_k))-\delta(h_k)\right| \geq \gamma^{n^2}.
	\end{eqnarray}
\end{lemma}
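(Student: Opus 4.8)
The plan is to isolate the single dominant summand $k=n$ and to absorb all remaining terms into it. The entire difficulty has already been packaged into the two estimates of Lemma~\ref{forn}: every difference $\alpha_n(\delta(h_k))-\delta(h_k)$ is bounded in modulus by the constant $2\gamma^{k^2+1}$, while the $k=n$ difference is bounded \emph{from below} by $2\gamma^{n^2}$ via \eqref{x_n_q_n^1}. Since both are pointwise (a.e.) inequalities between functions in $S(0,1)$ and scalars, I would work entirely pointwise and apply the reverse triangle inequality at almost every $t\in(0,1)$.

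First I would write, a.e. on $(0,1)$,
\begin{align*}
\left|\sum_{k=1}^n\bigl(\alpha_n(\delta(h_k))-\delta(h_k)\bigr)\right|
\geq \left|\alpha_n(\delta(h_n))-\delta(h_n)\right|
-\sum_{k=1}^{n-1}\left|\alpha_n(\delta(h_k))-\delta(h_k)\right|.
\end{align*}
Applying \eqref{x_n_q_n^1} to the first term and the upper estimate of Lemma~\ref{forn} to each remaining term gives, a.e.,
\begin{align*}
\left|\sum_{k=1}^n\bigl(\alpha_n(\delta(h_k))-\delta(h_k)\bigr)\right|
\geq 2\gamma^{n^2}-\sum_{k=1}^{n-1}2\gamma^{k^2+1}.
\end{align*}

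Next I would estimate the error sum. Since $\gamma>1$, the quantity $\gamma^{k^2}$ is increasing in $k$, so
\begin{align*}
\sum_{k=1}^{n-1}2\gamma^{k^2+1}\leq 2\gamma(n-1)\gamma^{(n-1)^2}=2(n-1)\gamma^{(n-1)^2+1}.
\end{align*}
Comparing this with $\gamma^{n^2}$, the exponent difference is $n^2-(n-1)^2-1=2(n-1)$, so the ratio $\gamma^{n^2}/\bigl(2(n-1)\gamma^{(n-1)^2+1}\bigr)=\gamma^{2(n-1)}/(2(n-1))$ tends to infinity as $n\to\infty$, the exponential $\gamma^{2(n-1)}$ dominating the linear prefactor $2(n-1)$. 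Hence there exists $N$ with $2(n-1)\leq\gamma^{2(n-1)}$, equivalently $\sum_{k=1}^{n-1}2\gamma^{k^2+1}\leq\gamma^{n^2}$, for all $n\geq N$. Substituting into the previous display yields $\geq 2\gamma^{n^2}-\gamma^{n^2}=\gamma^{n^2}$ a.e., which is exactly the claimed inequality \eqref{2n}.

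The only genuine point of care is that all inequalities of Lemma~\ref{forn} are pointwise (a.e.) inequalities between functions and constants, so the reverse triangle inequality must be invoked pointwise rather than in any norm; no cancellation needs to be tracked, because the single term $k=n$ already dominates the whole tail. The mild ``obstacle'' is therefore merely the bookkeeping of extracting $N$ from $2(n-1)\leq\gamma^{2(n-1)}$, and since $\gamma>1$ this is automatic for all sufficiently large $n$, requiring no quantitative control whatsoever.
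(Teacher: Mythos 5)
Your proof is correct and follows essentially the same route as the paper's: isolate the $k=n$ term by the (pointwise) reverse triangle inequality, bound the remaining $n-1$ terms by $(n-1)$ times the largest one using the upper estimate of Lemma~\ref{forn}, and observe that $\gamma^{n^2}$ eventually dominates $2(n-1)\gamma^{(n-1)^2+1}$ because $\gamma>1$. The only cosmetic difference is that the paper further bounds $n-1\leq\gamma^{n}$ to get the cleaner intermediate quantity $2\gamma^{n^2-n+2}$, whereas you compare the exponents directly; both yield the same conclusion.
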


\begin{proof} By Lemma \ref{forn}, we have
	\begin{equation*}
	\left|\alpha_n(\delta(h_k))-\delta(h_k)\right| \leq 2\gamma^{k^2+1}.
	\end{equation*}
Recall that $\gamma >1$ and 	note that for sufficiently large $n$, we have
	\begin{eqnarray*}
		\sum\limits_{k=1}^{n-1}\gamma^{k^2+1} & \leq (n-1) \gamma^{(n-1)^2+1}\leq \gamma^n\cdot \gamma^{(n-1)^2+1}=\gamma^{n^2-n+2}.
	\end{eqnarray*}
 We infer that
	\begin{equation}\label{en_k<n-1}
\left| 	\sum\limits_{k=1}^{n-1} \alpha_n(\delta(h_k))-\delta(h_k)\right| \leq\sum\limits_{k=1}^{n-1} \left|\alpha_n(\delta(h_k))-\delta(h_k)\right| \leq 2\sum_{k=1}^{n-1} \gamma^{k^2+1} \le 2\gamma^{n^2-n+2}.
	\end{equation}
When  $k=n$, by  \eqref{x_n_q_n^1},  we have that
	\begin{eqnarray}\label{enen}
	\left|\alpha_n(\delta(h_n))-\delta(h_n)\right| \geq  2\gamma^{n^2}.
	\end{eqnarray}
Combining inequalities \eqref{en_k<n-1} and \eqref{enen}, we conclude that
	\begin{align*}
\left|\sum\limits_{k=1}^n \alpha_n(\delta(h_k))-\delta(h_k)\right|
&	\geq 	
\left|\alpha_n(\delta(h_n))-\delta(h_n)\right|	-	\left| \sum\limits_{k=1}^{n-1} \alpha_n(\delta(h_k))-\delta(h_k)\right|\\
& \geq 2\gamma^{n^2}-2\gamma^{n^2-n+2}=\gamma^{n^2} + (\gamma^{n^2} - 2\gamma^{n^2-n+2}) \\
&=\gamma^{n^2} + \gamma^{n^2-n+2}(\gamma^{n-2 } - 2)\ge \gamma^{n^2}
	\end{align*}
for all sufficiently large $n$.
\end{proof}

The following lemma is the key ingredient  in our proof. It shows that an estimate similar to that of Lemma \ref{enenen} holds for the infinite sum $h=\sum_{k=1}^\infty h_k$.

\begin{lemma}\label{ffff}
There exists a number $N$ such that for any $n\ge N $, we have
	\begin{eqnarray}\label{nondiffer}
	\left|\alpha_n(\delta(h))-\delta(h)\right| & \geq  &  \gamma^{n^2}.
	\end{eqnarray}
\end{lemma}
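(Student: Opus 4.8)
The plan is to split the convergent series $h=\sum_{k\geq 1}h_k$ at level $n$ and to show that the whole tail contributes nothing to $\alpha_n(\delta(h))-\delta(h)$, so that this difference reduces exactly to the finite sum already controlled by Lemma \ref{enenen}. First I would write $h=p_n+t_n$, where $p_n=\sum_{k=1}^n h_k$ and $t_n=\sum_{k>n}h_k$. Since the series \eqref{hhh} converges in $(\cA,\left\|\cdot\right\|_\cA)$ and $p_n\in\cA$, the tail $t_n=h-p_n$ again lies in $\cA$ and is itself the $\left\|\cdot\right\|_\cA$-limit of its partial sums $\sum_{k=n+1}^m h_k$.

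The crucial observation concerns the tail. Because $\left\|\cdot\right\|_\cA$ is a symmetric $\Delta$-norm, it is invariant with respect to translation, so each $\alpha_n$ acts as a $\left\|\cdot\right\|_\cA$-isometry and is in particular continuous on $\cA$; combining this with $\alpha_n(h_k)=h_k$ for all $k>n$ (which is \eqref{alk}), I would pass $\alpha_n$ through the limit to obtain $\alpha_n(t_n)=t_n$. Invoking the $G$-invariance $\alpha_n\circ\delta=\delta\circ\alpha_n$ then gives $\alpha_n(\delta(t_n))=\delta(\alpha_n(t_n))=\delta(t_n)$, so that $\alpha_n(\delta(t_n))-\delta(t_n)=0$. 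The point of treating $t_n$ as a single element of $\cA$ fixed by $\alpha_n$ is that it sidesteps any need to sum the infinitely many (individually vanishing) terms $\alpha_n(\delta(h_k))-\delta(h_k)$, $k>n$, and thereby removes any appeal to continuity of $\delta$ on the tail.

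It then remains to assemble the pieces. By additivity of $\delta$ we have $\delta(h)=\delta(p_n)+\delta(t_n)$, and since $p_n$ is a finite sum, additivity of both $\delta$ and $\alpha_n$ gives $\alpha_n(\delta(p_n))-\delta(p_n)=\sum_{k=1}^n\bigl(\alpha_n(\delta(h_k))-\delta(h_k)\bigr)$. Hence
\begin{equation*}
\alpha_n(\delta(h))-\delta(h)=\sum_{k=1}^n\bigl(\alpha_n(\delta(h_k))-\delta(h_k)\bigr)+\bigl(\alpha_n(\delta(t_n))-\delta(t_n)\bigr)=\sum_{k=1}^n\bigl(\alpha_n(\delta(h_k))-\delta(h_k)\bigr),
\end{equation*}
and Lemma \ref{enenen} supplies the lower bound $\gamma^{n^2}$ for all $n\geq N$, which is precisely \eqref{nondiffer}.

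I expect the only genuinely delicate point to be the identity $\alpha_n(t_n)=t_n$, that is, the interchange of $\alpha_n$ with the infinite summation defining the tail; the isometry property of $\alpha_n$ with respect to the symmetric $\Delta$-norm is exactly what licenses this interchange, after which the remaining steps are purely formal manipulations of the additive maps $\delta$ and $\alpha_n$ together with a direct invocation of the preceding lemma.
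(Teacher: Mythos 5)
Your proof is correct and follows essentially the same route as the paper: both split $h$ into $\sum_{k=1}^n h_k$ plus the tail $t_n=\sum_{k>n}h_k$, show the tail is fixed by $\alpha_n$ by interchanging $\alpha_n$ with the limit of the partial sums (using \eqref{alk}), and then reduce the estimate to Lemma \ref{enenen} via the $G$-invariance and additivity of $\delta$. The only cosmetic difference is that you justify $\alpha_n(t_n)=t_n$ by invoking the $\left\|\cdot\right\|_\cA$-isometry of $\alpha_n$ directly, whereas the paper performs the same interchange after passing to the weaker measure topology via the norm $\left\|\cdot\right\|_S$; both justifications are valid.
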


\begin{proof}
Let $N$ be large enough such that \eqref{2n} holds and let $n\ge N$.
Recall that $\left\|\cdot \right\|_\cA$-convergence implies measure-convergence (see Section \ref{2.2}).
Recall  the symmetric $\Delta$-norm  $\left\|\cdot\right\|_S$  on $S(0,1)$ induced by the measure topology (see Section \ref{2.2}).
Hence, $\left \|  \sum _{k=l}^\infty h_k \right\|_\cS  \stackrel{\eqref{hhh}}{\rightarrow} 0 $  as $l\to \infty$.
For any $l>n,$ we have
\begin{align*}
&\qquad \left\|\alpha_n\left(\sum _{k=n+1}^\infty h_k\right) -  \sum _{k=n+1}^\infty h_k \right\|_S\\
&~\le~ \left \| \alpha_n\left(\sum _{k=n+1}^l h_k\right) -  \sum _{k=n+1}^l h_k \right\|_S +  \left \| \alpha_n \left(\sum_{k=l}^\infty h_k\right)\right\|_S + \left \|  \sum _{k=l}^\infty h_k \right\|_S  \\
&\stackrel{\eqref{alk}}{=} 2 \left \|  \sum _{k=l}^\infty h_k \right\|_\cS   \rightarrow 0 ~ \mbox{ as $l\to \infty$.} \end{align*}
Hence, $ \alpha_n \left(\sum\limits_{k=n+1}^\infty h_k\right) = \sum\limits_{k=n+1}^\infty h_k$.
By the assumption, $\delta$ is $G$-invariant and hence it commutes with $\alpha_n$, $n\ge 1$. This implies  that for any $n \ge N$, we have
	\begin{align*}
	\left|\alpha_n(\delta(h))-\delta(h)\right| &~ =  \left|\delta(\alpha_n(h))-\delta(h)\right|=
\left|\delta(\alpha_n(h)-h)\right|\\
&\stackrel{\eqref{hhh}}{=} \left|\delta\left(\alpha_n\left(\sum\limits_{k=n+1}^\infty h_k\right)-\sum\limits_{k=n+1}^\infty h_k\right)+\delta\left(\sum\limits_{k=1}^{n} (\alpha_n(h_k)-h_k)\right)\right|\\
& ~= ~ \left|\delta\left(\sum\limits_{k=1}^{n} (\alpha_n(h_k)-h_k)\right)\right|=
\left|\sum\limits_{k=1}^{n} \alpha_n(\delta(h_k))-\delta(h_k)\right|\stackrel{\eqref{2n}}{\geq} \gamma^{n^2}.
	\end{align*}
Here, the series $\sum\limits_{k=n+1}^\infty \alpha_n(h_k)$ and $\sum\limits_{k=n+1}^\infty h_k$ are considered in the topology with respect to $\left\|\cdot \right\|_\cA$ and therefore, in the measure topology.
	\end{proof}

The following lemma is a simple  observation. For the sake of completeness, we incorporate  a detailed  proof for it.
\begin{lemma}\label{c2}Let  $F:\mathbb{N}\to \mathbb{R}$ be an  increasing function with $\lim_{n\to \infty} F(n)=\infty $.
For any $y\in S(0,1)$,  the inequality
$$
|\alpha_n(y) - y|  \geq  F(n)
$$
fails for all  sufficiently large number $n.$
\end{lemma}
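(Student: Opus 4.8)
The plan is to reduce the lemma to the single fact that the dyadic translations act continuously on $S(0,1)$ in the measure topology, i.e.\ that $\alpha_n(y)\to y$ in measure as $n\to\infty$ for every fixed $y\in S(0,1)$. Granting this, the conclusion is almost immediate. Writing $z_n:=\alpha_n(y)-y$, convergence in measure means $m(\{t:|z_n(t)|\ge\delta\})\to0$ for each $\delta>0$. Fixing $\delta=1$, for all large $n$ this measure is strictly less than $1$, so the set $\{t:|z_n(t)|<1\}$ has positive measure. Since $F$ is increasing with $F(n)\to\infty$, we have $F(n)\ge 1$ for all large $n$, so this same set is contained in $\{t:|z_n(t)|<F(n)\}$, which therefore has positive measure. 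Hence the pointwise a.e.\ inequality $|\alpha_n(y)-y|\ge F(n)$ cannot hold for these $n$, that is, it fails for all sufficiently large $n$.

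The substance of the proof is thus establishing translation-continuity in measure. First I would record that each $\alpha_n$ is the rotation of the circle $[0,1)$ by $2^{-n}$ and is therefore measure-preserving, so $m(\{|\alpha_n(w)|\ge\delta\})=m(\{|w|\ge\delta\})$ for every $w\in S(0,1)$ and $\delta>0$. Next I would treat step functions: for an interval $I\subset[0,1)$ the symmetric difference of $I$ and its rotate by $2^{-n}$ has measure at most $2^{-n+1}$, so for a finite linear combination $s$ of indicators of intervals one gets $m(\{\alpha_n(s)\ne s\})\to 0$, whence $\alpha_n(s)\to s$ in measure. Finally I would pass to a general $y$ by the standard three-$\varepsilon$ argument, using the splitting
\begin{align*}
\alpha_n(y)-y=\alpha_n(y-s)+(\alpha_n(s)-s)+(s-y),
\end{align*}
choosing a step function $s$ with $m(\{|y-s|\ge\delta/3\})$ small, controlling the first term by measure-preservation of $\alpha_n$ and the second by the step-function case. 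This yields $m(\{|\alpha_n(y)-y|\ge\delta\})\to0$ for every $\delta>0$, i.e.\ $\alpha_n(y)\to y$ in measure.

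The only delicate point is this last step, and it is mild: one must ensure the approximating step functions are dense in the measure topology (they are, since simple functions are dense in $S(0,1)$ and intervals approximate measurable sets), and that the first error term is transported without growth, which is exactly measure-preservation. Everything else is routine. An equivalent and slightly slicker finish is to note that the hypothetical inequality would force $\mu(\tfrac12;\alpha_n(y)-y)\ge F(n)\to\infty$ on the decreasing rearrangement, contradicting $\mu(\tfrac12;\alpha_n(y)-y)\to0$, which is merely a restatement of $\alpha_n(y)\to y$ in measure.
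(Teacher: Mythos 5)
Your proposal is correct, but it takes a genuinely different route from the paper. You reduce the lemma to the statement that $\alpha_n(y)\to y$ in measure, which you then prove via measure-preservation, the case of step functions, and a three-$\varepsilon$ approximation argument; all of these steps are sound (step functions are indeed dense in the measure topology, and measure-preservation transports the error term $\alpha_n(y-s)$ without growth). The paper's proof is shorter and avoids any approximation or density argument: it picks a closed set $A$ with $m(A)>\tfrac34$ on which $|y|\le c$, notes that $|\alpha_n(y)|\le c$ on the translate $A_n$ (which also has measure $>\tfrac34$ by measure-preservation), so that $m(A\cap A_n)>\tfrac12$ and $|\alpha_n(y)-y|\le 2c$ on $A\cap A_n$; the hypothetical a.e.\ bound $|\alpha_n(y)-y|\ge F(n)$ then forces $F(n)\le 2c$ eventually, contradicting $F(n)\to\infty$. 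What each approach buys: the paper's argument uses only that the $\alpha_n$ are measure-preserving (it would work verbatim for any sequence of measure-preserving transformations, with no need for the translation amounts $2^{-n}$ to tend to zero), whereas your argument establishes the stronger and independently useful fact that the dyadic translations act continuously on $S(0,1)$ in measure, at the cost of the extra density machinery.
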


\begin{proof}
Take a closed subset $A$ in $(0,1)$ with the Lebesque measure
$\displaystyle m(A)>\frac{3}{4}$ such that
$$
|y|\chi_{A}\leq c
$$
for some $c>0.$
Further, for each $n\geq 1$, the  closed subset $A_n: =\left\{\left\{t+\frac{1}{2^n}\right\}: t\in A\right\}$ satisfies
$\displaystyle m(A_n)=m(A)>\frac{3}{4}$ and
$$
|\alpha_n(y)|\chi_{A_n}=|\alpha_n(y)|\alpha_n\left(\chi_{A}\right)=\alpha_n\left(|y|\chi_{A}\right)\leq c.
$$
For each $n\ge 1$,  we have that $\displaystyle m(A\cap A_n)>\frac{1}{2}$ and
$$
  |\alpha_n(y)-y|\chi_{A\cap A_n}\leq 2c\chi_{A\cap A_n}.
$$
Assume by contradiction that  $F(n)\le |\alpha_n(y) - y|   $ for all sufficiently large   $n$.
 Then, $$F(n)  \chi_{A\cap A_n}\leq |\alpha_n(y)-y|\chi_{A\cap A_n}\le 2c\chi_{A\cap A_n},$$
which implies that   $F(n) \leq 2c$ for all sufficiently large $n$.
 This is  a contradiction with the assumption on the function $F$.
\end{proof}

Now we are in a position to present the proof of our main result, Theorem \ref{thmmain}.

\begin{proof}[Proof of Theorem \ref{thmmain}]
Assume by contradiction that there exists a non-trivial $\delta:\cA \to S(0,1)$ commuting with $G$.
	Let $h$ be defined as in  \eqref{hhh}.
By Lemma \ref{ffff},  the function  $y:=\delta(h)\in \cA $
satisfies inequality  \eqref{nondiffer}.
 Setting $F(n)=\gamma^{n^2}$, $n\in \mathbb{N}$, we obtain  a contradiction with  Lemma~\ref{c2}.
Hence, there exists no non-trivial  derivation $\delta:\cA \to S(0,1)$ commuting with $G$.
\end{proof}

The following result is an immediate consequence of Theorem \ref{thmmain}, which shows that the translation invariance property of $\frac{d\cdot}{dt}$ is not shared by its extension.

\begin{corollary}\label{cor:ado}
The  differential operator $\frac{d\cdot }{dt}$ has no translation-invariant extension to the  algebra $S(0,1)$.
\end{corollary}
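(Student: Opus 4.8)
The plan is to obtain the corollary as an immediate specialization of Theorem~\ref{thmmain}, taking the ambient algebra to be $\cA = S(0,1)$ itself. First I would check that $S(0,1)$ is an admissible instance of $\cA$. As recorded in Section~\ref{2.2}, the functional $\left\|\cdot\right\|_S = \inf_{t>0}[t+\mu(t;x)]$ is a complete symmetric $F$-norm, hence in particular a complete symmetric $\Delta$-norm, on $S(0,1)$; its induced topology coincides with the measure topology, and simple functions are dense in it. Thus $(S(0,1),\left\|\cdot\right\|_S)$ is a complete symmetric $\Delta$-normed $*$-subalgebra of $S(0,1)$ in which simple functions are dense, so every hypothesis imposed on $\cA$ in Theorem~\ref{thmmain} is met.

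With this in hand I would argue by contradiction. Suppose $\delta\colon S(0,1)\to S(0,1)$ is a derivation extending the differential operator, so that $\delta|_{D(0,1)} = \frac{d\cdot}{dt}$, and suppose $\delta$ is translation-invariant in the sense used throughout the paper, i.e.\ it commutes with every $\alpha_n$ and hence with all of $G$ (this is precisely the property enjoyed by $\frac{d\cdot}{dt}$, by Remark~\ref{1.1}). Applying Theorem~\ref{thmmain} with $\cA = S(0,1)$ then forces $\delta$ to be trivial, that is $\delta \equiv 0$.

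Finally I would contradict triviality by evaluating $\delta$ on a single explicit element. The identity function $t\mapsto t$ belongs to $C^{(1)}(0,1)\subset D(0,1)\subset S(0,1)$ and satisfies $\frac{d}{dt}\,t = 1$, so $\delta(t) = 1 \neq 0$, contradicting $\delta\equiv 0$; hence no such extension exists. I do not anticipate any genuine obstacle here, since the entire weight of the argument is already carried by Theorem~\ref{thmmain}. The only points requiring verification are that $S(0,1)$ qualifies as an admissible $\cA$ (recorded in Section~\ref{2.2}) and that $\frac{d\cdot}{dt}$ is visibly non-zero on $D(0,1)$; if anything is to be flagged as the ``hard part,'' it is simply confirming these hypotheses, both of which are routine.
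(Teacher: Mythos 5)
Your proposal is correct and is essentially the paper's own (implicit) argument: the paper presents the corollary as an immediate consequence of Theorem~\ref{thmmain} applied with $\cA=S(0,1)$ equipped with the complete symmetric $F$-norm $\left\|\cdot\right\|_S$ from Section~\ref{2.2}, exactly as you do. Your added verifications (that simple functions are dense in this norm since its topology is the measure topology, and that any extension of $\frac{d\cdot}{dt}$ is non-trivial because $\delta(t)=1$) are the right details to supply.
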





 Theorem \ref{thmmain} holds for   a subalgebra $(\cA,\left\|\cdot \right\|_\cA)$ of $S(0,1)$ in which simple functions are dense.
However, simple functions are not dense in $(AD(0,1),\rho)$.
By Theorem~\ref{thmmain}, for  any complete symmetric $\Delta$-normed subalgebra $(\cA,\left\|\cdot \right\|_\cA)$ of $S(0,1)$ in which  simple functions are dense, there exists no derivation from $\cA$ into $S(0,1)$ commuting with all dyadic-rational translations on $(0,1)$.
Also, recall that it is shown in \cite{BKS1} that the algebra $AD(0,1)$ is the maximal subalgebra of $S(0,1)$ admitting unique extension of $\frac{d\cdot}{dt}:D(0,1)\to S(0,1)$.
It is interesting to drop  the ``symmetric  $\Delta$-normed'' assumption and  consider the following problem.
\begin{prob}
	Is the algebra $AD(0,1)$ a maximal subalgebra in $S(0,1)$ admitting a translation-invariant derivation as an  extension of   $\frac{d\cdot}{dt}:D(0,1)\to S(0,1)$?
\end{prob}

{\bf Acknowledgements}
The   third   author was supported by the Australian Research Council  (FL170100052).


\begin{thebibliography}{99}



\bibitem{AAK} {S.~Albeverio, Sh.~A.~Ayupov,  K.~K.~Kudaybergenov,}  \textit{Structure of derivations on
various algebras of measurable operators for type I von Neumann algebras,}  J. Funct. Anal.  256 (2009), 2917--2943.

\bibitem{Arens}
R. Arens,
{\it The space $L^\omega(0;1)$ and convex topological rings,}
Bull. Amer. Math. Soc. 52 (1946), 931--935.



\bibitem{Astashkin}
S.V. Astashkin,
{\it Interpolation of operators in quasinormed groups of measurable functions,}
Siberian Math. J. 35 (6) (1994), 1075--1082.

\bibitem{Ayupov} Sh. A. Ayupov,  \textit{Derivations in algebras of measurable operators,} Dokl. Uzbek
Akad. Nauk 3 (2000), 14-17.
%





%


\bibitem{BSCh04}  {A.~F.~Ber, V.~I.~Chilin,  F.~A.~Sukochev,}
\textit{Derivations in regular commutative algebras,} Math. Notes.
75 (2004),   418--419.

\bibitem{BSCh06}  {A.F. Ber, V.I. Chilin, F.A. Sukochev,} \textit{Non-trivial derivations
on commutative regular algebras,} Extracta Math., 21  (2006), 107--147.


\bibitem{BCS14}    {A.~F.~Ber, V.~I.~Chilin, F.~A.~Sukochev,}
\textit{Continuous derivations on algebras of locally measurable
operators are inner,} Proc. London Math. Soc. 109 (2014)
65--89.


\bibitem{BKS1}
A.~F.~Ber, K.~Kudaybergenov, F.~Sukochev,
{\it Notes on derivations of Murray-von Neumann algebras,}
arXiv:1906.00243v1, 2019.

 \bibitem{BKS2}
A.~F.~Ber, K.~Kudaybergenov, F.~Sukochev,
{\it Derivations of Murray-von Neumann algebras,}
submitted for publication.

\bibitem{BdPS} A.F. Ber, B. de Pagter, F.A. Sukochev, \textit{Derivations in
algebras of operator-valued functions}, J. Operator Theory,
66 (2011),  261-300.


\bibitem{Ber_S_1}
A. Ber, F. Sukochev,
{\it Commutator estimates in $W^*$-factors,}
Trans. Amer. Math. Soc. 364 (10) (2012), 5571--5587.

\bibitem{Ber_S_2}
A. Ber, F. Sukochev,
{\it Commutator estimates in $W^*$-algebras,}
J. Funct. Anal. 262 (2) (2012),  537--568.









\bibitem{Bra} {O.~Bratteli and D.~Robinson,}
 \textit{Operator algebras and quantum statistical mechanics,} Vol. 1
Springer-Verlag, 1979.






\bibitem{Ciach} L. J. Ciach, \textit{Linear-topological spaces of operators affiliated with von Neumann algebra}, Bull. Polish Acad. Sc.   31 (1983), no. 3--4, 161--166.










\bibitem{Dixmier}
J. Dixmier,
{\it Les algebres d'operateurs dans l'Espace Hilbertien, 2nd ed.,}
Gauthier-Vallars, Paris, 1969.



\bibitem{DP2}
P. Dodds, B. de Pagter,
{\it Normed K\"{o}the spaces: A non-commutative viewpoint,}
Indag. Math. 25 (2014), 206--249.

\bibitem{DPS}
P. Dodds, B. de Pagter, F. Sukochev,
{\it Theory of noncommutative integration,} unpublished manuscript (to appear).

\bibitem{DSZ2015}
K. Dykema, F. Sukochev, D. Zanin,
{\it Algebras of Log-integrable functions and operators,}
Complex. Anal. Oper. Theory (2016), 1775--1787.






\bibitem{Fed}
H. Federer. Geometric Measure Theory. Heidelberg, New York,   1996.






\bibitem{HLS2017}
J. Huang, G. Levitina, F. Sukochev,
{\it Completeness of symmetric $\Delta$-normed spaces of $\tau$-measurable operators,}
Studia Math. 237 (3) (2017), 201--219.


\bibitem{HS}
J. Huang, F. Sukochev,
{ \it An interpolation between $L_0(\cM,\tau)$ and $L_\infty(\cM,\tau)$},
Math. Z. 293 (2019), 1657--1672.

\bibitem{HSZ18}
J. Huang, F. Sukochev, D. Zanin,
{\it Logarithmic submajorization and order-preserving isometries, }
J. Funct. Anal. 278 (4) (2020), 108352.


\bibitem{HM}
H. Hudzik, L. Maligranda,
{\it An interpolation theorem in symmetric function $F$-spaces,}
Proc. Amer. Math. Soc. 110 (1) (1990), 89--96.




\bibitem{Kad66} R. Kadison,  {\it Derivations of Operator Algebras,}
Ann.\ Math.\
83 (2) (1966), 280--293.

\bibitem{KL} R.V. Kadison, Z. Liu, \textit{A note on derivations of Murray--von Neumann algebras},
PNAS, 111(6)  (2014),  2087--2093.

\bibitem{KL_survey} R. Kadison, Z. Liu, {\it Derivations of Murray--von Neumann algebras.} Math. Scand.  115 (2014), 206--228.

\bibitem{KLT} R.  V. Kadison, Z. Liu, A. Thom,  \textit{A note on commutators in algebras of unbounded operators},
arXiv:1901.10711.

\bibitem{KR1}
R. Kadison, J. Ringrose,
 Fundamentals of the Theory of Operator Algebras I,
Academic Press, Orlando, 1983.

\bibitem{KRII}  R. V. Kadison and J. R. Ringrose, Fundamentals of the Theory of Operator Algebras,
vol II, Academic Press, 1986.

\bibitem{KPR}
N. Kalton, N. Peck, J. Rogers,
{\it An F-space Sampler},
 London Math. Soc. Lecture Note Ser., vol.89, Cambridge University Press, Cambridge, 1985.

\bibitem{Kusraev}
A.G. Kusraev,
{\it Automorphishms and derivations on a universally complete complex $f$-algebra,}
Siberian Math. J. 47 (1) (2006), 77--85.


\bibitem{LSZ}
S. Lord, F. Sukochev, D. Zanin,
 {\it Singular traces. Theory and applications.}
de Gruyter Studies in Mathematics, 46, 2013.


%
%










\bibitem{MvN1} F. Murray, J. Von Neumann, {\it On rings of operators.} Ann. Math. {\bf 37} (1936), no. 1, 116--229.






\bibitem{Neu37} J.  von Neumann, \textit{Continuous rings and their arithmetics}. Proc. Nat Acad. Sci. U.S.A.  23 (1937),  341--349.









\bibitem{Nel} E. Nelson, \textit{Notes on non-commutative integration},
J. Funct.  Anal. 15 (2) (1974), 103--116.




\bibitem{RS}
M. R\'{e}dei, M. Stoltzner,
{\it John von Neumann and the foundations of Quantum Physics,}
Vienna Circle Institute Yearbook,
Kluwer Academic Publishers, 2000.



\bibitem{Sak66} S. Sakai, {\it Derivations of $W^*$-algebras,} Ann.\ Math.\
83  (1966), 273--279.






 \bibitem{Sak2} {S.~Sakai,}  Operator algebras in dynamical systems,
Cambridge University Press,  1991.


\bibitem{Sakai77} S. Sakai,   The theory of unbounded derivations in $C^*$-algebras, Lecture
Notes, Univ. of Copenhagen and Univ. of Newcastle upon Tyne, 1977.


\bibitem{Segal}
I.E. Segal, {\em A non-commutative extension of abstract
integration}, Ann. Math. 57 (1953),  401--457.

\bibitem{SS} A. M. Sinclair, R. R. Smith,
Finite von Neumann
algebras and masas. London Mathematical Society Lecture Note Series: 351, Cambridge University Press, 2008.










\bibitem{Tak}
M. Takesaki,
Theory of Operator Algebras I,
 Springer-Verlag, New York, 1979.

\bibitem{Tak2}
M. Takesaki,
Theory of Operator Algebras II,
 Springer-Verlag, Berlin-Heidelberg-New York, 2003.







%
%


%

%




%
%

\end{thebibliography}
 \end{document}